\newtheorem{theorem}{Theorem}[section]
\newtheorem{proposition}[theorem]{Proposition}
\newtheorem{corollary}[theorem]{Corollary}
\newtheoremstyle{notauto}{}{}{\itshape}{}{\bfseries}{.}{0.5em}{\thmnote{#3}}
\theoremstyle{notauto}
\theoremstyle{definition}
\newtheorem{definition}[theorem]{Definition}
\newtheorem{example}[theorem]{Example}
\theoremstyle{remark}
\newtheorem{remark}[theorem]{Remark}
\renewcommand{\leq}{\leqslant}
\begin{document}
\title{good action on a finite group}
\subjclass[2000]{20D10, 20D15, 20D45}
\keywords{solvable group, automorphism, Fitting height, fixed point free
action, regular module}
\author{G\"{u}l\. In Ercan$^{*}$}
\address{G\"{u}l\. In Ercan, Department of Mathematics, Middle East
Technical University, Ankara, Turkey}
\email{ercan@metu.edu.tr}
\thanks{$^{*}$Corresponding author}
\author{\.{I}sma\. Il \c{S}. G\"{u}lo\u{g}lu}
\address{\.{I}sma\. Il \c{S}. G\"{u}lo\u{g}lu, Department of Mathematics, Do%
\u{g}u\c{s} University, Istanbul, Turkey}
\email{iguloglu@dogus.edu.tr}
\author{Enrico Jabara}
\address{Enrico Jabara, Dipartimento di Filosofia, Universit\`a Ca' Foscari
di Venezia, Venice, Italy}
\email{jabara@unive.it}

\begin{abstract}
Let $G$ and $A$ be finite groups with $A$ acting on $G$ by automorphisms. In
this paper we introduce the concept of ``good action"; namely we say the
action of $A$ on $G$ is good, if $H=[H,B]C_H(B)$ for every subgroup $B$ of $A
$ and every $B$-invariant subgroup $H$ of $G.$ This definition allows us to
prove a new noncoprime Hall-Higman type theorem.

If $A$ is a nilpotent group acting on the finite solvable group $G$ with $%
C_G(A)=1$, a long standing conjecture states that $h(G)\leq \ell(A)$ where $%
h(G)$ is the Fitting height of $G$ and $\ell(A)$ is the number of primes
dividing the order of $A$ counted with multiplicities. As an application of
our result we prove the main theorem of this paper which states that the
above conjecture is true if $A$ and $G$ have odd order, the action of $A$ on 
$G$ is good and some other fairly general conditions are satisfied.
\end{abstract}

\maketitle

\section{Introduction}

Let $A$ be a finite group that acts on the finite group $G$ by
automorphisms. We write $h(G)$ for the Fitting height of $G$, and $\ell (A)$
for the length of the longest chain of subgroups of $A$ which coincides with
the number of primes dividing the order of $A$ counted with multiplicities
if $A$ is solvable. Thompson \cite{Tho} proved that in case where $A$ and $G$
are both solvable and $(|G|,|A|)=1,\,h(G)$ is bounded in terms of $%
h(C_{G}(A))$ and $\ell (A)$. Thompson's result has inspired the work of many
authors and has been refined in particular in \cite{K}, \cite{HaIs}, and 
\cite{Tur}. Namely, in \cite{Tur} Turull obtained that $h(G)\leq
h(C_{G}(A))+2\ell (A).$

Due to the lack of some nice consequences of coprime action the situation is
very difficult to handle without the coprimeness condition $(|G|,|A|)=1$. An
example obtained by Bell and Hartley \cite{Bell} shows that for any
nonnilpotent finite group $A$, there exists a finite group $G$ of
arbitrarily large Fitting height on which $A$ acts fixed point freely and
noncoprimely. However if $A$ is nilpotent and $C_G(A) = 1$, a special case
of Dade's theorem \cite{Dade} provides an exponential bound for $h(G)$ in
terms of $\ell(A)$. Some improvements of this bound are obtained in
particular cases, e.g. see \cite{Ja} for cyclic $A$. But apparently,
improving to a linear bound is a difficult problem.

A celebrated work of Thompson \cite{Tho2} asserts that every finite group
admitting a fixed point free automorphism of prime order is nilpotent. This
result gave birth to the long standing conjecture that, under the
coprimeness condition $(|G|, |A|) = 1$, if $C_G(A)=1$ then $h(G)$ is at most 
$\ell (A)$. There have been a great amount of work on this problem for
various cases of $A$ and finally Turull settled the conjecture for almost
all $A$ in a sequence of papers (see in particular \cite{Turfix}, \cite{Tur}
and \cite{Tursur}) by proving the following result.

\vspace{2mm} \textbf{Theorem} \textit{Let $A$ be a finite group acting by
automorphisms on the finite solvable group $G$ such that $(|G|,|A|) = 1$ and 
$C_G(A)=1$. If every proper subgroup of $A$ acts with regular orbits on $G$
then $h(G)\leq \ell(A).$} \vspace{2mm}

Here a group $B$ is said to act with regular orbits on another group $G$ if
for any $B$-invariant section $S$ of $G$ on which $B$ acts irreducibly there
exists $x\in S$ such that $C_B(x)=C_B(S)$ so that the $B$-orbit (which is
actually an $B/C_B(S)$-orbit) is a regular orbit, that is an orbit of length 
$|B/C_B(S)|.$ It should be noted that there are large classes of finite
groups $A$ always acting with regular orbits on any finite group $G$ of
coprime order on which it acts. But there also exist many finite groups
which do not need to act with regular orbits.

The example due to Bell and Hartley \cite{Bell} mentioned above forced to
state the noncoprime version of the conjecture as follows: \vspace{2mm}

\textbf{Conjecture} \textit{If $A$ is a finite nilpotent group acting fixed
point freely on a finite (solvable) group $G$ by automorphisms then $h(G)
\leq \ell (A)$. } \vspace{2mm}

Although the noncoprime version has been proven in some special cases (\cite%
{cheng}, \cite{EGJGT},\cite{EG},\cite{EGS},\cite{EMo}), it is still
unproven even in the case where $A$ is cyclic.

In the present paper we introduce the concept of a good action of $A$ on $G$%
; namely we say the action is ``\textit{good}" if $H=[H,B]C_H(B)$ and for
every subgroup $B$ of $A$ and for every $B$-invariant subgroup $H$ of $G$.
It can be regarded as a generalization of the coprime action due to the fact
that every coprime action is good. Some other features of the coprime
action, e.g. the existence of an $A$-invariant Hall subgroups, are actually
consequences of the fact that coprime action is a good action as we show in
Proposition 2.2. On the other hand there are noncoprime actions which are
good (see Remark 2.4). So it is natural to ask whether one can get results
in the noncoprime case which are similar to the above theorem due to Turull
if some nice consequences of a coprime action are kept. In other words one
can ask whether the relative easiness of the proofs in the coprime case is
due to ``goodness" of the action or not. Our main result is a partial answer
which provides the best possible upper bound for the Fitting height of a
solvable group of odd order admitting a good and fixed point free action.
Namely we prove the following. \vspace{2mm}

\textbf{Theorem} (Theorem 4.5)\textit{Let $A$ be a finite nilpotent group of
odd order which is $C_q\wr C_q$-free for every prime $q$ act on the finite group $G$. Suppose that
\begin{itemize}
\item[(a)] $C_G(A) = 1$;
\item[(b)] the action of $A$ on $G$ is good;
\item[(c)] every subgroup of $A$ acts on $G$ with regular orbits;
\item[(d)] there is a subgroup $B$ of $A$ such that $\bigcap_{a\in A} \left[ G,B \right]^a =1$.
\end{itemize} 
Then $G$ is a solvable group of Fitting
height at most $\ell(A:B)$ where $\ell(A:B)$ is the number of prime divisors
of $|A:B|$ counted with multiplicities. } \vspace{2mm}

Notice that we always have $\bigcap_{a\in A} \left[ G,B \right]^a =1$ for $%
B=1$ which yields the following. \vspace{2mm}

\textbf{Corollary} \textit{Let $A$ be a finite nilpotent group of odd order
which is $C_q\wr C_q$-free for every prime $q$ act on the finite group $G$. Suppose that this action is good and every subgroup of $A$ acts with
regular orbits on $G$. If $C_G(A)=1$ then $G$ is a solvable group of Fitting height at most $%
\ell(A)$.} \vspace{2mm}

The proof of this theorem follows the model of the proof of Theorem 2.2 in 
\cite{Turfix}. Although at some points we could have avoided the details by
adding ``by an argument similar to ..." and referring to \cite{Turfix}; for
the reader's convenience, we have formulated them so that they can be easily
followed without any further references.

It should also be noted that in order to overcome some of the main
difficulties arising from noncoprimeness we needed a new Hall-Higman type
theorem. In this direction we obtained Theorem 3.1 and Theorem 3.3 which are
of independent interest in the study of noncoprime action problems.

Throughout the paper all groups are finite, notation and terminology are
standard.

\section{Good Action}

In this section we introduce the concept of a good action. Some of its
immediate consequences are obtained below.

\begin{definition}
\textit{Let $G$ and $A$ be groups where $A$ acts on $G$ by automorphisms. We
say ``\textit{the action of $A$ on $G$ is good}" if the equality $%
H=[H,B]C_H(B)$ holds for any subgroup $B$ of $A$ and for any $B$-invariant
subgroup $H$ of $G$.}
\end{definition}

\begin{proposition}
Let $G$ and $A$ be groups where $A$ acts on $G$ by automorphisms, and
suppose that the action of $A$ on $G$ is good. If $B\leq A$ and $N$ is a
normal $B$-invariant subgroup of $G$ then

\begin{itemize}
\item[(1)] the action of $B$ on every $B$-invariant subgroup of $G$ is good;

\item[(2)] the equality $[H,B,B]=[H,B]$ holds for any $B$-invariant subgroup 
$H$ of $G$;

\item[(3)] the equality $C_{H/N}(B)=C_H(B)N/N$ holds for any $B$-invariant
subgroup $H$ of $G$ containing $N$;

\item[(4)] the induced action of $B$ on $G/N$ is good.
\end{itemize}
\end{proposition}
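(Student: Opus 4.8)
The plan is to prove the four parts in order, bootstrapping each from the definition of a good action and from earlier parts. Part~(1) is essentially immediate from the definition: if $B \le A$ and $H$ is a $B$-invariant subgroup of $G$, then any subgroup $C \le B$ is also a subgroup of $A$, and any $C$-invariant subgroup $K$ of $H$ is a $C$-invariant subgroup of $G$; hence the defining equality $K = [K,C]C_K(C)$ holds, which is exactly the statement that the action of $B$ on $H$ is good. So no real work is needed here beyond unwinding definitions.

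For part~(2), I would apply the good-action hypothesis twice. Fix a $B$-invariant subgroup $H$; then $[H,B]$ is again $B$-invariant, so goodness gives $[H,B] = \bigl[[H,B],B\bigr]\,C_{[H,B]}(B) = [H,B,B]\,C_{[H,B]}(B)$. The point is that $[H,B,B]$ is normal in $\langle H,B\rangle$ (indeed $[H,B]\trianglelefteq \langle H,B\rangle$ and $[H,B,B]\trianglelefteq\langle H,B\rangle$ by standard commutator identities), and working modulo $[H,B,B]$ the group $[H,B]/[H,B,B]$ is centralized by $B$; combined with $[H,B] = [H,B,B]\,C_{[H,B]}(B)$ and a three-subgroups / coprime-free-style argument, one forces $[H,B] = [H,B,B]$. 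The reverse inclusion $[H,B,B]\le[H,B]$ is trivial. (This is the familiar fact that in a good action the commutator map stabilizes, exactly as it does in coprime action.)

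Part~(3) is where I expect the only genuine subtlety. Let $H$ be $B$-invariant with $N \le H$, $N \trianglelefteq G$. The inclusion $C_H(B)N/N \le C_{H/N}(B)$ is obvious. For the reverse, take $\bar{h} = hN \in C_{H/N}(B)$, so that $[h,b] \in N$ for all $b \in B$; the preimage $H_0 = \langle h\rangle N$ — more safely, $H_0 = $ the $B$-invariant subgroup generated by $h$ and $N$ — is $B$-invariant, and $[H_0, B] \le N$. Now I apply goodness to $H_0$: $H_0 = [H_0,B]\,C_{H_0}(B) \le N\,C_{H_0}(B)$, so $h = n c$ with $n \in N$, $c \in C_{H_0}(B) \le C_H(B)$, giving $\bar h = \bar c \in C_H(B)N/N$. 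The care needed is only in choosing a $B$-invariant subgroup $H_0$ containing a given coset representative with $[H_0,B]\le N$; using all of $C_{H/N}(B)$-preimages at once, i.e. $H_0 = $ (full preimage of $C_{H/N}(B)$), also works since that preimage is $B$-invariant and its commutator with $B$ lies in $N$.

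Finally, part~(4) combines (1)-style reasoning with (3). Let $\bar H = H/N$ be a $B$-invariant subgroup of $G/N$, i.e. $H$ is a $B$-invariant subgroup of $G$ with $N \le H$. Apply goodness to $H$: $H = [H,B]\,C_H(B)$. Passing to $G/N$ and using the standard identity $[H/N, B] = [H,B]N/N$ together with part~(3) in the form $C_{H/N}(B) = C_H(B)N/N$, we get $\bar H = H/N = ([H,B]N/N)(C_H(B)N/N) = [H/N,B]\,C_{H/N}(B) = [\bar H, B]\,C_{\bar H}(B)$, which is the assertion. The main obstacle throughout is part~(2); parts (1), (3), (4) are formal once the stabilization of commutators is in hand.
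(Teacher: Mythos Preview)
Your treatments of (1), (3), and (4) are correct and coincide with the paper's: for (3) the paper also takes the full preimage $X$ of $C_{H/N}(B)$, applies goodness to $X$ to get $X=[X,B]C_X(B)\le N\,C_X(B)$, and then derives (4) from (3) exactly as you do.

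Your argument for (2), however, has a genuine gap. You apply goodness to $[H,B]$, obtaining $[H,B]=[H,B,B]\,C_{[H,B]}(B)$, and then assert that ``a three-subgroups / coprime-free-style argument'' forces $[H,B]=[H,B,B]$. But the ingredients you have assembled do not give this: that $B$ centralizes $[H,B]/[H,B,B]$ is automatic and carries no information, and your claim that $[H,B,B]\trianglelefteq\langle H,B\rangle$ is neither justified nor helpful. The issue is that you applied goodness to the wrong subgroup. Apply it once to $H$: from $H=[H,B]\,C_H(B)$, write any $h\in H$ as $h=xc$ with $x\in[H,B]$ and $c\in C_H(B)$; then for $b\in B$ one has $[h,b]=[xc,b]=[x,b]^{c}=[x^{c},b]\in[[H,B],B]$, using that $c$ centralizes $b$ and that $[H,B]\trianglelefteq H$. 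Hence $[H,B]\le[H,B,B]$, and equality follows. So (2) is the easiest part rather than ``the main obstacle''; the paper accordingly just labels (1) and (2) as straightforward.
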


\begin{proof}
(1) and (2) are straightforward. To prove (3) set $X/N=C_{H/N}(B)$. Since $X$
is $B$-invariant, we have $X=[X,B]C_X(B)$ with $[X,B]\leq N$. It follows
that $X=NC_X(B)$ whence $C_H(B)N/N=X/N$. Notice next that $H=[H,B]C_H(B)$
implies 
\begin{equation*}
H/N=[H,B]C_H(B)N/N=[H,B]N/NC_{H/N}(B)=[H/N,B]C_{H/N}(B) 
\end{equation*}
as $C_H(B)N/N=C_{H/N}(B)$ proving (4).
\end{proof}

\begin{proposition}

Let $G$ and $A$ be solvable groups where $A$ acts on G by automorphisms. The
action of $A$ on $G$ is good whenever $(|[G,A]|,|GA/[G,A]|)=1$.
\end{proposition}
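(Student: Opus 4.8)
The plan is to reduce to the situation where $H$ is an arbitrary $B$-invariant subgroup of $G$ (for $B \le A$) and one must show $H = [H,B]C_H(B)$, exploiting the hypothesis $(|[G,A]|,|GA/[G,A]|) = 1$. First I would record the elementary consequences of this coprimeness: writing $N = [G,A]$, the subgroup $N$ is a normal $A$-invariant subgroup of $G$, the quotient $G/N$ is centralized by $A$ (so $A$ acts trivially there), and the semidirect product $NA$ has the property that $N$ is a normal Hall subgroup with complement (a conjugate of) $A$. Thus the action of $A$ on $N$ is \emph{coprime}, and on $G/N$ it is trivial; the whole difficulty is to glue these two pieces together for an arbitrary $B$-invariant $H$, which need not itself split as nicely.

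The key steps, in order. (1) Fix $B \le A$ and a $B$-invariant subgroup $H \le G$. Set $M = H \cap N$, a normal $B$-invariant subgroup of $H$ on which $B$ acts coprimely. (2) Apply the standard coprime-action decomposition to $M$: $M = [M,B]C_M(B)$, and moreover $[M,B,B] = [M,B]$ and $C_{M/[M,B]}(B) = C_M(B)[M,B]/[M,B]$; these are the classical facts about coprime action on $M$. (3) Handle the quotient $H/M$: since $M = H \cap N$, we have $H/M \cong HN/N \le G/N$, and $A$ — hence $B$ — acts trivially on $G/N$, so $B$ acts trivially on $H/M$; in particular $[H,B] \le M$, so $[H,B] = [M,B]$ and $[H,B]$ is a $B$-invariant subgroup of $M$ lying inside $N$. (4) Now the goal $H = [H,B]C_H(B)$ becomes: every $h \in H$ can be written $h = yc$ with $y \in [H,B] = [M,B]$ and $c \in C_H(B)$. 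Since $[h,B] \le [H,B] \le M$ and $B$ acts coprimely on $M$, I would use the coprime action to adjust $h$ within its coset $h[M,B]$: because $M = [M,B]C_M(B)$ and $[M,B,B]=[M,B]$, one shows the coset $hM$ contains a $B$-fixed point modulo $[M,B]$, and then — using that $B$ acts coprimely on the (normal, $B$-invariant) subgroup $[M,B]$ of $H$ — lifts this to an actual element of $C_H(B)$ in the coset $h[M,B]$; this last lift is exactly the coprime-action statement $H = [H,B]C_H(B)$ applied to the coprime action of $B$ on the subgroup generated by $\langle h \rangle$-translates inside $M$, together with the fact that $h$ normalizes $[M,B]$.

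The main obstacle I expect is Step (4): although $B$ acts coprimely on $M = H \cap N$, it does \emph{not} act coprimely on $H$ itself, so one cannot simply quote "coprime action on $H$." The technical heart is therefore a lemma of the shape: if $B$ acts on a group $H$, $M \trianglelefteq H$ is $B$-invariant with $B$ acting coprimely on $M$, $B$ acts trivially on $H/M$, and $[M,B,B] = [M,B]$, then $H = [H,B]C_H(B)$. I would prove this by first using the coprimeness on $M$ to get $C_{H/[M,B]}(B) = C_H(B)[M,B]/[M,B]$ (again a coprime-action statement, now for $B$ acting on $H/[M,B]$ with normal subgroup $M/[M,B]$ central, so effectively coprime), which gives $H = C_H(B)[M,B]$; and then, since $[M,B] = [H,B]$, this is precisely $H = [H,B]C_H(B)$. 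Assembling these observations completes the proof; everything else is the routine bookkeeping of commutator identities and the elementary coprime-action toolkit.
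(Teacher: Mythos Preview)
Your argument is correct and is essentially the paper's proof in different packaging. Both set $H_0=H\cap[G,A]$, note that $(|H_0|,|B|)=1$ while $B$ centralizes $H/H_0$, and finish via Schur--Zassenhaus; the paper does this explicitly by taking, in the solvable group $HB$, a Hall complement $T\supseteq B$ to the normal Hall subgroup $H_0$ and observing $[T\cap H,B]\le T\cap H_0=1$, whereas you invoke the equivalent coprime lemma $C_{H/H_0}(B)=C_H(B)H_0/H_0$. Two small points to tidy: the equality $[H,B]=[M,B]$ asserted in your step~(3) is not immediate at that stage (and only the trivial inclusion $[M,B]\le[H,B]$ is needed for the conclusion), and the justification ``$M/[M,B]$ central, so effectively coprime'' is not right---$M/[M,B]$ need not be central in $H/[M,B]$; the correct input is simply the standard fact that $C_{H/M}(B)=C_H(B)M/M$ whenever $M\unlhd H$ is $B$-invariant with $(|M|,|B|)=1$ and one of $M,B$ is solvable.
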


\begin{proof}
Let $B\leq A$. Then for any $B$-invariant subgroup $H$ of G, $H\cap
[G,A]=H_0\lhd H$ and $H/H_0\cong [G,A]H/[G,A]$ has order coprime to $H_0$.
Therefore $HB$ has a Hall subgroup $T$ containing $B$ which is a complement
of $H_0$ in $HB$. Now $H=H_0(T\cap H)$ and clearly we have $%
H_0=[H_0,B]C_{H_0}(B)$ as $(|H_0|,|B|)=1$. We also have $[T\cap H,B]\leq
T\cap H_0=1$, yielding that $H=[H,B]C_{H}(B)$.
\end{proof}

\begin{remark}
Motivated by Proposition 2.3 we can construct some concrete examples of good
action as follows:\newline

\textit{Example 1.} Let $H=(C_5\times C_5)\rtimes SL(2,3)$ be the Frobenius
group with kernel isomorphic to $C_5\times C_5$. Since $H$ has a unique
minimal normal subgroup and $O_2(H)=1$ there exists a faithful
irreducible $H$-module $V$ over $GF(2).$ Let $G=V\rtimes H$ and $\langle
x\rangle$ be a Sylow $3$-subgroup of $G$. If $\alpha$ is the inner
automorphism induced by $x$ on $G$ then the action of $A=\langle \alpha
\rangle$ on $G$ is good by the above proposition.\newline

A slight modification of Example 1 shows that $A$ need not be a subgroup of $Inn(G)$:\newline

\textit{Example 2.} Let $G=R\langle x\rangle$ be the group built in Example
1 where $R=O_{3^{\prime}}(G)$, and let $T=\langle t\rangle\cong C_2$. Set $%
\Gamma=G\wr T$ and let $\Gamma_0\cong (R_1\times R_2)\rtimes S_3$ where $%
R_1\cong R\cong R_2$ with ${R_1}^t=R_2$ and $S_3$ is the symmetric group of
degree $3$ generated by $\{x^{-1}x^t,t\}$. If $\beta$ is the automorphism
induced by conjugation by $xx^t$ on $\Gamma_0$, then one can verify that the
action of $A=\langle \beta \rangle$ on $\Gamma_0$ is good. Moreover $\beta$
is not an inner automorphism of $\Gamma_0.$ 
\end{remark}

\begin{proposition}
Let $G$ and $A$ be groups where $A$ acts on G by automorphisms, and suppose
that the action of $A$ on $G$ is good. Let $p\in \pi(A)$ and let $B$ be a $p$%
-subgroup of $A$. If $G$ is $p$-solvable then $[G,B]$ is a $p^{\prime}$%
-group.
\end{proposition}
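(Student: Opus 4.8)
The plan is to induct on $|G|$ and reduce to the case where $G$ has a normal structure that lets us isolate the $p$-part. First I would observe that since $B$ is a $p$-group and the action is good, Proposition~2.2(2) gives $[G,B,B]=[G,B]$, so it suffices to show $[G,B]$ has no nontrivial $B$-invariant $p$-section; equivalently, working inside the good action of $B$ on the $B$-invariant subgroup $[G,B]$ (Proposition~2.2(1)), we may assume $G=[G,B]$ and aim to prove $G$ is a $p'$-group. Suppose not. Since $G$ is $p$-solvable, either $O_{p'}(G)\neq 1$ or $O_p(G)\neq 1$.

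If $O_{p'}(G)=N\neq 1$, pass to $G/N$: by Proposition~2.2(4) the induced action of $B$ on $G/N$ is good, and $[G/N,B]=G/N$, so by induction $G/N$ is a $p'$-group, hence $G$ itself is a $p'$-group, a contradiction. So we are reduced to $O_{p'}(G)=1$, whence by $p$-solvability $P:=O_p(G)\neq 1$ and $C_G(P)\le P$. Now I would use goodness applied to the $B$-invariant subgroup $P$: the key point is that $B$ is a $p$-group acting on the $p$-group $P$, and here the ``good'' hypothesis $P=[P,B]C_P(B)$ forces $[P,B]=1$. Indeed, $B$ acting on a finite $p$-group $P$ always satisfies $[P,B]<P$ unless $P=1$ (a $p$-group acting on a $p$-group fixes a nontrivial element of every invariant section), so $P=[P,B]C_P(B)$ with $[P,B]$ a proper $B$-invariant subgroup forces, by induction on $|P|$ applied to the section $C_P(B)$, that in fact $[P,B]=1$; that is, $B$ centralizes $P=O_p(G)$.

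Then I would leverage $C_G(O_p(G))\le O_p(G)$: since $B$ centralizes $O_p(G)$, we get $B\le C_{GB}(O_p(G))$. The standard argument (as in Hall--Higman--type reductions) is that $B$ now acts trivially on $G$: because $C_G(O_p(G))\le O_p(G)$ and $B$ centralizes $O_p(G)$, coprime-free arguments applied to the action of the $p$-group $B$ on the $p$-solvable group $G$ with $O_{p'}(G)=1$ show $B$ stabilizes a chain $1\le O_p(G)\le G$ with $[G,B]\le O_p(G)$ and $[O_p(G),B]=1$, so $[G,B,B]=1$; but $[G,B]=G$ together with $[G,B,B]=[G,B]$ (Proposition~2.2(2)) gives $G=[G,B,B]=1$, the final contradiction. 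Hence $[G,B]$ was a $p'$-group all along.

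The main obstacle I expect is the step asserting that a $p$-group $B$ acting on a $p$-group $P$ with $P=[P,B]C_P(B)$ forces $[P,B]=1$, and more generally pinning down exactly why $B$ must act trivially once it centralizes $O_p(G)$ in the $O_{p'}(G)=1$ situation; this is where one must be careful to use goodness in the form ``every $B$-invariant section decomposes,'' apply it to the lower central series $[P,B]\ge[P,B,B]\ge\cdots$ which is stationary by 2.2(2), and combine with the fact that $C_G(O_p(G))\le O_p(G)$ for a $p$-solvable group with trivial $p'$-core. Everything else is a clean induction on $|G|$ using parts (1), (2), and (4) of Proposition~2.2 to transport the good action to subgroups and quotients.
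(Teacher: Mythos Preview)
Your argument is correct and follows essentially the same line as the paper's proof: reduce to $H=[G,B]$, kill the $p'$-core, show $B$ centralizes $X=O_p$ via $[X,B,B]=[X,B]$ together with nilpotency of the $p$-group $XB$, then use the three subgroups lemma and $C_G(O_p)\le O_p$ to get $[H,B]\le X$, and apply $[H,B,B]=[H,B]$ once more to conclude. The paper does this in one pass on $\overline{H}=H/O_{p'}(H)$ rather than by induction on $|G|$, but the content is identical.

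Two small points of exposition: your phrase ``by induction on $|P|$ applied to the section $C_P(B)$'' is not the argument you actually want; the clean reason $[P,B]=1$ is exactly what you say later, namely $[P,B]=[P,B,B]=\cdots$ by Proposition~2.2(2), while nilpotency of the $p$-group $PB$ forces this stationary chain to be trivial. And the step you label ``coprime-free arguments'' is precisely the three subgroups lemma: $[B,P,G]=1$ and $[P,G,B]\le[P,B]=1$ give $[G,B,P]=1$, hence $[G,B]\le C_G(P)\le P$. With those two clarifications your write-up matches the paper's proof.
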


\begin{proof}
Set $H=[G,B]$ and $\overline{H}=H/O_{p^{\prime }}(H)$. By Proposition 2.2
the action of $B$ on $\overline{H}$ and hence on $X=O_p(\overline{H})$ is
good. Then $[X,B,B]=[X,B]$ by Proposition 2.2 and so $[X,B]=1$. As $[B, X, 
\overline{H}]=1=[X, \overline{H},B]$, by the three subgroups lemma we have $%
[ \overline{H} ,B,X]=1$. Since $G$ is $p$-solvable we get $[\overline{H}%
,B]\leq C_{\overline{H}}(X)\leq X$. Then $[\overline{H},B]=[\overline{H}%
,B,B]\leq [X,B]=1$ by Proposition 2.2. This establishes the claim.
\end{proof}

\begin{proposition}
Let $G$ be a solvable group acted on by a nilpotent group $A$. If the action
is good, then there exists an $A$-invariant Hall $\sigma$-subgroup in
$G$ for any nonempty $\sigma\subseteq\pi(G)$.
\end{proposition}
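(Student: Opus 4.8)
The plan is to argue by induction on $|G|$, with the substantive point hidden in the case where $G/M$ is already a $\sigma$-group for a minimal normal $A$-invariant subgroup $M$. So suppose $G\neq 1$ and let $M$ be a minimal element of the set of nontrivial normal $A$-invariant subgroups of $G$; since $G$ is solvable, standard arguments show that $M$ is an elementary abelian $r$-group for a single prime $r$. By Proposition 2.2 the action of $A$ on $G/M$ and on every $A$-invariant subgroup of $G$ is again good, so the inductive hypothesis is available there. If $r\in\sigma$, I would lift an $A$-invariant Hall $\sigma$-subgroup $H/M$ of $G/M$ to an $A$-invariant subgroup $H$ of $G$; as $M$ is a $\sigma$-group, $H$ is then an $A$-invariant Hall $\sigma$-subgroup of $G$. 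If $r\notin\sigma$, lift such an $H/M$ to $H\le G$; when $H<G$, induction applied to $H$ gives an $A$-invariant Hall $\sigma$-subgroup of $H$, which is one of $G$ as well. The only remaining possibility is $H=G$, i.e. $G/M$ is a $\sigma$-group; here $|G:M|=|G|_\sigma$ and an $A$-invariant Hall $\sigma$-subgroup of $G$ is precisely an $A$-invariant complement to $M$ in $G$.

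Thus everything reduces to the following claim, which is where goodness is used: if $A$ is nilpotent and acts goodly on the solvable group $G$, and $M\lhd G$ is an $A$-invariant elementary abelian $r$-group such that $G/M$ is an $r'$-group (which is the situation above, as there $G/M$ is a $\sigma$-group with $r\notin\sigma$), then $M$ has an $A$-invariant complement in $G$. Let $A_r$ be the Sylow $r$-subgroup of $A$, which is normal in $A$ because $A$ is nilpotent, and set $L=[G,A_r]$. By Proposition 2.5 (applied to the action of $A$ on $G$ with the prime $r$, using that $G$ is $r$-solvable) $L$ is an $r'$-group; moreover $L\lhd G$, $L$ is $A$-invariant, and $L\cap M=1$. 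Since $L$ is a normal $r'$-subgroup and $G/M$ is an $r'$-group, $L$ is contained in $O_{r'}(G)$, hence in every complement to $M$ in $G$ (these complements are exactly the Hall $r'$-subgroups of $G$). Consequently $K\mapsto KL/L$ is an $A$-equivariant bijection from the set of complements to $M$ in $G$ onto the set of complements to $ML/L$ in $G/L$. In $G/L$ the subgroup $A_r$ acts trivially, so the action factors through $A/A_r$, an $r'$-group; replacing $G$ by $G/L$, $M$ by $ML/L$ and $A$ by $A/A_r$, I am reduced to the situation where, in addition, $(|A|,|M|)=1$.

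In this coprime situation the (nonempty) set of complements to $M$ in $G$ carries a transitive action of the abelian group $M$ by conjugation, compatible with the action of $A$, and $(|A|,|M|)=1$; Glauberman's lemma then produces an $A$-fixed point, i.e. an $A$-invariant complement, finishing the induction. I expect the main obstacle to be exactly this complement step: in the classical coprime case $(|G|,|A|)=1$ it is an immediate consequence of coprime action theory, and the content here is that goodness --- via Proposition 2.5 --- allows one to strip off the problematic $r$-part $A_r$ of $A$ by passing to $G/[G,A_r]$, after which nothing but a coprime argument is needed. The routine parts to verify carefully are that a minimal normal $A$-invariant subgroup of a solvable group is single-prime elementary abelian, and that ``complement to $M$'' and ``Hall $\sigma$-subgroup'' genuinely coincide in the bottom case of the induction.
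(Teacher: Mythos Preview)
Your proof is correct, and its overall skeleton (induction, reduce to the case where $G/M$ is a $\sigma$-group and $M$ is an $r$-group with $r\notin\sigma$, then find an $A$-invariant complement to $M$) matches the paper's. Where you diverge is in the complement step itself. The paper performs a further reduction: it picks a maximal normal $A$-invariant $K$ with $N\leq K<G$, applies induction to $K$ and a Frattini argument to reach the case where $G$ is an $\{r,s\}$-group with $O_s(G)=1$, and then runs a case analysis on whether $r\in\pi(A)$ or $s\in\pi(A)$, using Proposition~2.5 and the three-subgroups lemma in each case. For this it needs induction on $|G|+|\pi(A)|$, so that it can throw away a Sylow subgroup of $A$ once it is shown to act trivially.

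Your route is shorter and more conceptual: you use Proposition~2.5 once to see that $L=[G,A_r]$ is an $r'$-group, observe that $L$ sits inside every complement, and pass to $G/L$ where $A_r$ acts trivially; the remaining action of $A/A_r$ is coprime to $ML/L$, so Glauberman's lemma on the transitive $ML/L$-set of complements finishes the job. This avoids both the extra Frattini reduction and the need to induct on $|\pi(A)|$. The paper's argument, on the other hand, is entirely self-contained and does not appeal to Glauberman's lemma, which may be why it was written that way. Both approaches isolate Proposition~2.5 as the place where goodness is genuinely used.
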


\begin{proof}
We proceed by induction on $|G|+|\pi(A)|.$ Let $N$ be a minimal normal subgroup of $GA$ contained in $G.$ Then $A$ acts
on $G/N$ by automorphisms and this action is good. It follows by induction that $G/N$ contains an
$A$-invariant Hall $\sigma$-subgroup, say $X/N.$ If $X\neq G$ then $A$ acts on $X$
and this action is also good. So $X$ contains an $A$-invariant Hall $\sigma
$-subgroup $Y$ by induction. Clearly $Y$ is an $A$-invariant Hall $\sigma$-subgroup of
$G.$ Therefore we can assume that $G/N$ is a $\sigma$-group. Note also that $N$ is an
elementary abelian $r$-group for some prime $r$ and we can assume that
$r\notin\sigma.$

Let now $K$ be a normal subgroup of $GA$ such that $N\leq K<G$ and $G/K$ is an elementary abelian $s$-group for some prime $s$. As $K$ is
$A$-invariant, it contains an $A$-invariant Hall
$\sigma$-subgroup, say $S,$ by induction. Then we have $K=NS$, and $G=NN_{G}(S)$ by the Frattini argument. Here $N_{G}(S)$ is $A$-invariant. If $N\cap N_{G}(S)=1$ then $N_{G}(S)$ is an $A$-invariant Hall $\sigma$-subgroup of $G.$ If $N\cap
N_{G}(S)\neq 1$ we have $N\leq N_{G}(S)$ since $N$ is an irreducible $N_{G}(S)A$-module, and hence $S\trianglelefteq G.$ When $S\neq1$ we apply induction to $G/S$ to get the claim. Thus we may assume that
$S=1$, whence $K=N$, $G$ is an $\{r,s\}$-group, $\sigma=\{s\}$ and
$O_{s}(G)=1.$

If $(|G|,|A|)=1,$ then the result is
well known. Suppose first that $r\in\pi(A)$ and let $A_r$ denote the Sylow $r$-subgroup of $A$. Because the action of $A_r$ on $G$ is also good, we have $[N,A_r]=1$ by Proposition 2.5. This implies by the
three subgroup lemma that $[[G,A_r],N]=1$ which shows that
$[G,A_r]\trianglelefteq C_{G}(N)=N$ and hence $[G,A_r]=[[G,A_r],A_r]=1.$ So we can
assume that $r\notin\pi(A)$ and $s\in\pi(A).$ Let $A_s$ be the Sylow
$s$-subgroup of $A$ and $T$ a Sylow $s$-subgroup of $GA$ containing $A_s$. Put $U=T\cap G$. Then $U$ is a Sylow $s$-subgroup of $G=NU$ and is normalized by
$A_s.$ As the action of $A_s$ on $U$ is good we get that $U\leqq C_{G}(A_s).$ As
$C_{N}(A_s)$ is $GA$-invariant and cannot be equal to $N$ we obtain
$C_{N}(A_s)=1$ and hence  $C_{G}(A_s)=U.$ So $U$ is an $A$-invariant Hall $\sigma
$-subgroup of $G.$ This completes the proof.
\end{proof}

In the rest of this section we study the relation between the Fitting
heights of $G$ and $C_G(A)$ in case of a good action.

\begin{definition}
(Definition $1.1$ and $1.2$ of \cite{Tur}) Let $G$ and $A$ be finite groups
where $A$ acts on $G$. We say that a sequence $(S_i),i= 1,\ldots ,h$ of $A$%
-invariant subgroups of $G$ is an $A$-tower of $G$ of height $h$ if the
following are satisfied:

\begin{itemize}
\item[(1)] $S_i$ is a $p_i$-group, $p_i$ is a prime, for $i= 1,\ldots ,h$;

\item[(2)] $S_i$ normalizes $S_j$ for $i\leq j$;

\item[(3)] Set $P_h=S_h,\, P_i=S_i/C_{S_i}(P_{i+1}), \, i= 1,\ldots ,h-1$
and we assume that $P_i$ is not trivial for $i= 1,\ldots ,h$;

\item[(4)] $p_i\ne p_{i+1}, \, i= 1,\ldots ,h-1$.
\end{itemize}

\vspace{1mm}

An $A$-tower $(S_i),\, i=1,\ldots ,h$ of $G$ is said to be irreducible if
the following are satisfied:

\begin{itemize}
\item[(5)] $\Phi(\Phi(P_i)) = 1,\, \Phi(P_i)\leq Z(P_i)$ and, if $p_i\ne 2$,
then $P_i$ has exponent $p_i$ for $i= 1,\ldots ,h.$ Moreover $P_{i-1}$
centralizes $\Phi(P_i)$;

\item[(6)] $P_1$ is elementary abelian;

\item[(7)] There exists $H_i$ an elementary abelian $A$-invariant subgroup
of $P_{i-1}$ such that $[H_i,P_i] = P_i$ for $i=2,\ldots ,h$;

\item[(8)] $(\prod_{j=1}^{i-1}S_j)A$ acts irreducibly on $P_i/\Phi(P_i)$.
\end{itemize}
\end{definition}

\begin{remark}
Moreover we can easily verify that Lemma 1.9 of \cite{Tur} requires that $G$
contains an $A$-invariant Sylow $p$-subgroup for every prime $p$ dividing $%
|G|$. Hence our Proposition 2.6 and Lemma 1.9 of \cite{Tur} yields 
\begin{equation*}
h(G) = \max \big \{h\; : \; \mbox{there is an $A$-tower of height $h$ in $G$}
\big \}. 
\end{equation*}
\end{remark}

\begin{theorem}
Let $A$ be a group of prime order $p$ acting on the solvable group $G$ such
that $C_G(A)$ is of odd order. If the action of $A$ on $G$ is good, then $%
h(G)\leq h(C_G(A))+4.$
\end{theorem}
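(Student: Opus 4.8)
The plan is to reduce to the known coprime case by splitting off the $p$-part of $G$. Write $C=C_G(A)$ and suppose $h=h(G)$; we want $h\le h(C)+4$. By Remark 2.9 (our Proposition 2.6 together with Lemma 1.9 of \cite{Tur}), we may fix an $A$-tower $(S_i)$, $i=1,\dots,h$, of $G$ realizing the Fitting height, and we may even take it irreducible. The key structural input is Proposition 2.5: since $A$ has prime order $p$ and $G$ is solvable (hence $p$-solvable), $[G,A]$ is a $p'$-group. Thus $G=[G,A]C_G(A)$ (goodness applied to $H=G$, $B=A$), and $G/[G,A]$ acts trivially under $A$.

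First I would handle the indices $i$ with $p_i=p$. There can be at most one such $i$ in the tower, because $p_i\ne p_{i+1}$; but more importantly, if $p_i=p$ then $S_i\le$ (a Sylow $p$-subgroup of $G$), and I would argue using $[G,A]$ being a $p'$-group that $S_i$ is centralized by $A$ modulo the relevant sections. Concretely, let $G_0=[G,A]$, a normal $A$-invariant $p'$-subgroup with $A$ acting coprimely on it. Restricting the tower to $G_0$ (intersecting each $S_i$ with $G_0$, or better, pushing the tower through the quotient $G/G_0$ on which $A$ is trivial) should show that the tower restricted to $G_0$ has height at least $h-c$ for a small constant $c$ — the loss coming only from the at most one $p$-layer and the $A$-fixed top quotient. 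Here I would invoke the coprime result: for the action of $A$ (prime order $p$, with $C_{G_0}(A)=C\cap G_0$ of odd order since $C$ has odd order) on the $p'$-group $G_0$, Turull's theorem (or the elementary prime-order case) gives $h(G_0)\le h(C_{G_0}(A))+$ small constant. Since $h(C_{G_0}(A))\le h(C)$, this chains to the desired bound.

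The delicate bookkeeping — and the step I expect to be the main obstacle — is controlling exactly how many layers of the $A$-tower are lost when passing between $G$, $G/G_0$, and $G_0$, so as to land on the constant $4$ rather than something larger. The issue is that an $A$-tower layer $S_i$ need not sit inside $G_0$ or map injectively to $G/G_0$; one must split $S_i$ using goodness ($S_i=[S_i,A]C_{S_i}(A)$ with $[S_i,A]\le G_0$) and track how $[S_i,A]$ and $C_{S_i}(A)$ interact with the tower conditions (2)–(4), in particular the non-degeneracy condition (3) that each $P_i$ be nontrivial. I would argue that at most one ``boundary'' layer is destroyed where the tower crosses from inside $G_0$ to the $A$-trivial part, that the unique possible $p$-layer costs at most one more, and that applying the coprime bound on $G_0$ (which itself carries an additive constant — from \cite{Tur} one gets $h\le h(C_{G_0}(A))+2\ell(A)=h(C_{G_0}(A))+2$ since $\ell(A)=1$) accounts for the rest, totalling at most $4$. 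Finally, the oddness of $C_G(A)$ is used precisely to invoke the sharp coprime statement (the conjecture is known under $C_G(A)=1$, and its $h(C_G(A))$-version under suitable hypotheses) without an extra penalty; I would cite the relevant refinement of Thompson's theorem for prime-order coprime action to close the argument.
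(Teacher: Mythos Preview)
Your proposal has a genuine gap and does not reach the bound $h(C_G(A))+4$.

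First, a factual error: the condition $p_i\ne p_{i+1}$ does \emph{not} force at most one $p$-layer in the tower; one could have $p_1=p_3=p_5=\cdots=p$. What is actually true (and is what the paper uses) is that by the three-subgroups lemma, $[P_{i+1},A]=1$ implies $[P_i,A]=1$; combined with the dichotomy $[P_i,A]\in\{1,P_i\}$ coming from goodness and irreducibility of $P_i/\Phi(P_i)$, the $A$-centralized layers form an initial segment $1,\dots,k$. For $i>k$ one has $[S_i,A]=S_i$, whence by Proposition~2.5 each such $S_i$ is a $p'$-group. So the $p$-layers all live at indices $\le k$, i.e.\ inside $C_G(A)$; they are not the obstruction.

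Second, and more seriously, your global strategy via $G_0=[G,A]$ only yields $h(G)\le 2h(C_G(A))+2$, not $h(C_G(A))+4$. Indeed, after the above replacement the top $h-k$ layers $S_{k+1},\dots,S_h$ sit inside $G_0$, so $h-k\le h(G_0)\le h(C_{G_0}(A))+2\le h(C_G(A))+2$ by the coprime bound; meanwhile $S_1,\dots,S_k$ is a tower in $C_G(A)$, giving $k\le h(C_G(A))$. Adding these yields $h\le 2h(C_G(A))+2$. The loss is exactly that the coprime bound on $G_0$ produces a \emph{separate} copy of $h(C_G(A))$, which you then add to the $k$ layers you already have in $C_G(A)$.

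The paper avoids this doubling by not bounding $h(G_0)$ at all. Instead it applies Theorem~3.1 of \cite{Tur} directly to the top of the tower $S_k,S_{k+1},\dots,S_h$ (a coprime tower once $p_k\ne p$): that result says the fixed-point subgroups $C_i=C_{S_i}(A)$ themselves satisfy the tower conditions (1)--(3) after deleting a single index $j$, possibly creating a collision $p_{j-1}=p_{j+1}$ which costs one further deletion. Concatenating this with the already-centralized layers $S_1,\dots,S_{k-1}$ gives a tower in $C_G(A)$ of height $h-1$ or $h-2$. When $p_k=p$ one must first excise $S_k$ (and sometimes collapse $S_{k+1}$ with $S_{k-1}$) before Theorem~3.1 applies; this boundary surgery costs at most two more, and is where the constant~$4$ comes from. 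The oddness of $|C_G(A)|$ enters precisely because Theorem~3.1 of \cite{Tur} requires the bottom prime of the coprime tower to be odd.

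In short: the missing idea is that the \emph{fixed-point subgroups} $C_{S_i}(A)$ of the upper layers form (almost) a tower inside $C_G(A)$, which one glues onto the lower layers already lying there; one never needs, and should not use, a separate Fitting-height bound on $[G,A]$.
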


\begin{proof}
Set $h = h(G)$. By the above remark there exists an irreducible $A$-tower $%
(S_i)$ of height $h$ in $G$, with $P_i = S_i/T_i, i=1,\ldots ,h$ and $P_0=1.$
Let $k$ be the largest integer with the property that $[P_k,A] = 1$. As the
action is good, by Proposition 2.2 (3) we have $C_{P_i}(A) =
C_{S_i}(A)T_i/T_i$ for each $i$, and hence we may assume that $S_i$ is
centralized by $A$ for each $i = 1,\ldots ,k$ and that $[S_{k+1},A] = S_{k+1}
$. It follows by Proposition 2.5 that $p_i\ne p$ for each $i = k +1,\ldots, h
$, that is $\prod_{i=k+1}^h S_i$ is a $p^{\prime}$-group. Notice also that $%
h-k > 4$ because $k\leq h(C_G(A))$.

Set $C_i = C_{S_i}(A)$ for each $i.$ Then $C_i = S_i$ for $i = 1,\ldots ,k$.
Also note that $p_k$ is odd as $|C_G(A)|$ is odd.

Suppose first that $p_k\ne p$, by Theorem 3.1 in \cite{Tur} applied to $%
S_h,\ldots , S_k$, we obtain $j\in \{k + 1, \ldots ,h\}$ such that the
sequence $C_h,\ldots ,C_{j+1},C_{j-1},\ldots ,C_k$ satisfies the conditions
(1), (2) and (3) of Definition 2.7, possibly with $p_{j+1} = p_{j-1}$. It
follows that either the sequence 
\begin{equation*}
C_h,\ldots , C_{j+1}, C_{j-1},\ldots , C_k,\ldots , C_1
\end{equation*}
forms an $A$-tower of height $h-1$ or the sequence 
\begin{equation*}
C_h,\ldots , C_{j+2}, C_{j-1},\ldots ,C_k,\ldots ,C_1
\end{equation*}
forms an $A$-tower of height $h-2$, and hence $h(G)\leq h(C_G(A)) + 2$.

Suppose next that $p_k = p$. Note that if $k = 1$ then $S_h,\ldots ,S_2$
forms an $A$-tower whose terms are $p^{\prime}$-subgroups, and by the main
result of \cite{Tur}, we get $h-1=h(\prod_{i=2}^h S_i)\leq h(C_G(A)) + 2$.
Therefore we may assume that $k > 1$. Notice that we have either $p_{k-1}$
and $p_{k+1}$ are equal, or not. If the former holds then the sequence $%
S_h,\ldots ,S_{k+2}, S_{k-1}$ forms an $A$-tower. As $p_{k-1}$ is odd, we
apply Theorem 3.1 in \cite{Tur}, and obtain $j\in \{k + 2,\ldots ,h\}$ such
that $C_h,\ldots ,C_{j+1}, C_{j-1},\ldots ,C_{k-1}$ is a sequence satisfying
the conditions (1), (2) and (3) of Definition 2.7, possibly with $p_{j+1} =
p_{j-1}$. It follows that either the sequence 
\begin{equation*}
C_h,\ldots ,C_{j+1}, C_{j-1},\ldots , C_{k+2}, C_{k-1},\ldots ,C_1
\end{equation*}
forms an $A$-tower of height $h-3$ or the sequence 
\begin{equation*}
C_h,\ldots ,C_{j+2}, C_{j-1},\ldots , C_{k+2}, C_{k-1},\ldots ,C_1
\end{equation*}
forms an $A$-tower of height $h-4$. Then we have $h\leq h(C_G(A)) + 4$ and
the theorem follows. Finally suppose that $p_{k-1}\ne p_{k+1}$. Now $%
S_h,\ldots ,S_{k+1}, S_{k-1}$ forms an $A$-tower. Notice again that as $%
p_{k-1}$ is odd Theorem 3.1 in \cite{Tur} gives $j\in \{k + 1,\ldots ,h\}$
such that the sequence $C_h,\ldots ,C_{j+1}, C_{j-1},\ldots , C_{k+1},
C_{k-1}$ satisfies the conditions (1), (2) and (3) of
Definition 2.7. It follows that either the sequence 
\begin{equation*}
C_h,\ldots ,C_{j+1}, C_{j-1},\ldots , C_{k+1}, C_{k-1}\ldots ,C_1
\end{equation*}
forms an $A$-tower of height $h-2$ or the sequence 
\begin{equation*}
C_h,\ldots ,C_{j+2}, C_{j-1},\ldots , C_{k+1}, C_{k-1}\ldots ,C_1
\end{equation*}
forms an $A$-tower of height $h-3$. Then we have $h(G)\leq h(C_G(A)) + 3$
and this completes the proof.
\end{proof}

\begin{theorem}
Let $A$ be a solvable group acting on the solvable group $G$ such that $%
C_G(B)$ is of odd order for every nontrivial subgroup $B$ of $A$. If the action is good then $h(G)\leq h(C_G(A)) +
4\ell(A)$.
\end{theorem}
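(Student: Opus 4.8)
The plan is to reduce Theorem~2.10 to the prime-order case already settled in Theorem~2.9 by induction on $\ell(A)$. If $A=1$ there is nothing to prove, so assume $A\neq 1$ and pick a subgroup $A_0\leq A$ of prime index, say $[A:A_0]=p$. Since the action of $A$ on $G$ is good, by Proposition~2.2(1) the action of $A_0$ on $G$ is good, and $C_G(B)$ has odd order for every nontrivial $B\leq A_0$; hence by the inductive hypothesis $h(G)\leq h(C_G(A_0))+4\ell(A_0)=h(C_G(A_0))+4(\ell(A)-1)$. It therefore suffices to prove that $h(C_G(A_0))\leq h(C_G(A))+4$.

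To do this, set $H=C_G(A_0)$. This is an $A$-invariant subgroup of $G$ (since $A_0\trianglelefteq A$), it is solvable, and by Proposition~2.2(1) the action of $A$ on $H$ is good. The subgroup $A_0$ acts trivially on $H$, so the action of $A$ on $H$ factors through $A/A_0$, a group of prime order $p$. I would like to apply Theorem~2.9 to $A/A_0$ acting on $H$: the hypotheses to check are that $H$ is solvable (clear) and that $C_H(A/A_0)=C_H(A)=C_G(A)$ has odd order, which is part of the hypothesis of Theorem~2.10 (taking $B=A$). The one genuinely delicate point is that Theorem~2.9 requires the action of the cyclic group of order $p$ to be \emph{good}, whereas a priori we only know the action of the full group $A$ on $H$ is good; one must verify that the induced action of $A/A_0$ on $H$ is good. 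This should follow because a $\bar B$-invariant subgroup of $H$ (for $\bar B\leq A/A_0$) is exactly a $B$-invariant subgroup where $B$ is the preimage of $\bar B$ in $A$, and $[K,B]C_K(B)=K$ together with $[K,A_0]=1$ (as $A_0$ acts trivially on $H\supseteq K$) gives $[K,\bar B]C_K(\bar B)=K$; so the induced action is good. Granting this, Theorem~2.9 yields $h(H)\leq h(C_H(A/A_0))+4=h(C_G(A))+4$, which is exactly what we needed.

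Combining the two inequalities gives
\begin{equation*}
h(G)\leq h(C_G(A_0))+4(\ell(A)-1)\leq h(C_G(A))+4+4(\ell(A)-1)=h(C_G(A))+4\ell(A),
\end{equation*}
completing the induction.

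The main obstacle I anticipate is the bookkeeping around hypotheses when passing to the subgroup $H=C_G(A_0)$ and to the quotient action of $A/A_0$: one must be careful that ``goodness'' is inherited both by the induced action on $H$ and that the odd-order condition $C_H(A)=C_G(A)$ is the right instance of the hypothesis. Everything else is a clean two-step induction; no new group-theoretic input beyond Proposition~2.2 and Theorem~2.9 should be required. (One should also note that the choice of the prime-index subgroup $A_0$ exists because $A$ is solvable, which is assumed.)
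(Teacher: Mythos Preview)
Your proof is correct and follows essentially the same approach as the paper: induct on $\ell(A)$, pass to a normal subgroup of prime index, apply the inductive hypothesis to that subgroup acting on $G$, and then apply Theorem~2.9 to the prime-order quotient acting on the centralizer. The only small imprecision is that you should explicitly choose $A_0$ to be a \emph{normal} subgroup of prime index (which exists since $A$ is solvable), rather than merely a subgroup of prime index, since the latter need not be normal; otherwise your verification of the hypotheses (in particular that goodness passes to the induced action of $A/A_0$ on $C_G(A_0)$) is more careful than the paper's own treatment.
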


\begin{proof}
Let $G$ be a minimal counterexample to the theorem and let $B\lhd A$ such
that $A/B$ is of prime order. By induction applied to the action of $B$ on $G
$ we see that $h(G)\leq h(C_G(B)) + 4\ell(B)$. Theorem 2.9 applied to the
action of $A/B$ on $C_G(B)$ yields that $h(C_G(B))\leq h(C_G(A)) + 4$,
whence $h(G)\leq h(C_G(A)) + 4\ell(A)$.
\end{proof}

As an immediate consequence we have the following.

\begin{corollary} Let $A$ be a solvable group acting on the solvable group $G$ of odd order. If the action is good then $h(G)\leq h(C_G(A)) +
	4\ell(A)$.
\end{corollary}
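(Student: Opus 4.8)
The plan is to obtain this as a direct specialization of Theorem~2.10, so essentially no new argument is required. Theorem~2.10 assumes that $C_G(B)$ has odd order for every nontrivial subgroup $B$ of $A$; I would simply observe that this hypothesis is automatic once $|G|$ is odd. Indeed, every subgroup of a group of odd order again has odd order, and in particular each centralizer $C_G(B)$ (including $C_G(A)$ itself) is a subgroup of $G$ and hence has odd order. Thus the hypotheses of Theorem~2.10 are met, and its conclusion $h(G)\leq h(C_G(A)) + 4\ell(A)$ gives exactly the assertion.

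The only point worth spelling out is why the odd-order requirement cannot be dropped in this chain of results: the inductive step in Theorem~2.10 reduces, via a normal subgroup $B\lhd A$ of prime index, to Theorem~2.9, which in turn needs $C_G(A)$ to be of odd order in order to invoke Theorem~3.1 of \cite{Tur} (the parity of the relevant $p_k$ is what is used there). Requiring $|G|$ odd is the cleanest hypothesis that guarantees this at every stage of the induction, which is precisely why the corollary is stated for $G$ of odd order rather than under a weaker centralizer condition. There is no genuine obstacle here; all the substantive work has already been carried out in Theorems~2.9 and~2.10, and the corollary is a one-line deduction.
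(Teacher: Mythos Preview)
Your proposal is correct and matches the paper's approach exactly: the paper states this corollary as ``an immediate consequence'' of Theorem~2.10 with no further proof, and your observation that $|G|$ odd forces every $C_G(B)$ to have odd order is precisely the one-line deduction intended.
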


\section{A noncoprime Hall-Higman Type Theorem}

This section is devoted to the study of some technical problems pertaining
to the proof of our main theorem. The following results are also of
independent interest because they seem to be effectively applicable in other
situations of noncoprime action.

\begin{theorem}
Let $A$ be a nilpotent group acting with regular orbits on the group $G$.
Let V be a complex $GA$-module so that $V_G$ is homogeneous on which $A$
acts fixed point freely. Suppose that $G/N$ is a $GA$-chief factor of $G$
which is an elementary abelian $r$-group for some prime $r$. If $A$
normalizes a Hall $r^{\prime}$-subgroup of $GA,$ then there exists a
homogeneous component $U$ of $V_{_{N}}$ and a subgroup $B$ of $A$ such that $%
B\leq N_A(U)$, $C_V(B)=0$ and $[G,B]\leq N_G(U)$. In particular we have $%
[G,B]\leq N$ in case where $V_{N}$ is not homogeneous.
\end{theorem}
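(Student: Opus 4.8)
The plan is to work with the homogeneous decomposition of $V_N$ under the normal subgroup $N$ and exploit the fact that $A$ permutes the homogeneous components. Since $V_G$ is homogeneous and $G/N$ acts on the set $\Omega$ of homogeneous components of $V_N$, Clifford theory tells us $G$ acts transitively on $\Omega$; because $G/N$ is an elementary abelian $r$-group that is a chief factor, the action on $\Omega$ is either trivial (so $V_N$ is homogeneous) or a regular permutation action of the group $G/N_0$, where $N_0 = N_G(U)$ for a fixed $U \in \Omega$, and $G/N_0 \cong G/N$ has order $|\Omega| = r^k$. In the homogeneous case, take $U = V_N$ and $B = A$: then $N_A(U) = A$, $C_V(B) = C_V(A) = 0$ by hypothesis, and $[G,B] \le N_G(U) = G$ trivially, while the ``in particular'' clause is vacuous. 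So assume $V_N$ is not homogeneous and $G/N$ acts regularly on $\Omega$ of size $r^k > 1$.

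Next I would bring in the $A$-action. The group $A$ acts on $\Omega$ as well, compatibly with the $G$-action, so $GA/N_0'$ (for a suitable stabilizer) acts on $\Omega$; more precisely $A$ permutes $\Omega$ and normalizes the regular $G/N$-action, so we get a semidirect-product-type action of $(G/N)\rtimes (A/C_A(\Omega))$ on $\Omega$ with $G/N$ regular. The key observation is that $\Omega$, as an $A$-set, can be identified with a section of $G$, in fact $\Omega \cong G/N_0$ as $A$-sets, and $G/N_0$ is an $A$-invariant section of $G$ on which $A$ acts; since $G/N \cong (G/N_0)$ is an $A$-invariant elementary abelian $r$-group and the action of $GA$ on it is irreducible (chief factor), the hypothesis that $A$ acts with regular orbits applies to this section. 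Hence there is a point $\omega_0 \in \Omega$ (equivalently a homogeneous component $U$) whose $A$-stabilizer $B := A_{\omega_0} = N_A(U)$ satisfies $C_A(G/N) = B \cap C_A(G/N)$ in the sense that $B$ realizes a regular orbit: $|A : B| = |A : C_A(G/N)|$, i.e.\ $B C_A(G/N) = A$ and the orbit $\omega_0^A$ has full length $|A:C_A(G/N)|$. This $B$ will be our subgroup.

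Then I would verify the three conclusions for this choice of $U$ and $B$. That $B \le N_A(U)$ is immediate since $B$ is by construction the stabilizer of $U$. For $[G,B] \le N_G(U)$: since $A$ acts with regular orbits means $B$ together with the kernel generate $A$, i.e.\ $A = BC_A(G/N)$, and $C_A(G/N)$ centralizes $G/N$, so modulo $N_G(U)$ (which contains $N$ and has index $r^k$ in $G$) the group $B$ already moves $U$ with the same stabilizer as all of $A$ does; more directly, $[G,B]N_G(U)/N_G(U)$ sits inside the $(G/N)$-part and $B$ fixing $\omega_0$ forces $[G,B]$ to fix $\omega_0$ as well by a commutator computation in the permutation action — here one uses that $G/N$ is abelian so that $[g,b]$ acting on $\omega_0$ can be computed coordinatewise. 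The ``in particular'' clause then says precisely that $N_G(U) = N$ in the regular (non-homogeneous) case, which we established. The remaining and most delicate point is $C_V(B) = 0$: one knows $C_V(A) = 0$, and $A = BC_A(G/N)$ with $C_A(G/N)$ acting on the homogeneous component $U$; since $A$ permutes the components transitively through $B$-cosets... actually the cleanest route is to use that $V = \bigoplus_{\omega \in \Omega} V_\omega$ with $A$ permuting summands via its action on $\Omega$, the orbit of $\omega_0$ under $A$ being regular of length $|A:B|$, so $V$ restricted to $B$ decomposes as an induced-type module $\mathrm{Ind}_B^A(V_{\omega_0})$-flavoured object; a fixed point of $B$ on $V$ would, after summing over the $A$-orbit, produce a fixed point of $A$ on $V$, contradicting $C_V(A)=0$ — unless $C_{V_{\omega_0}}(B) = 0$ already, which is what we want, and this forced vanishing is exactly where fixed-point-freeness of $A$ on $V$ combined with the regular-orbit structure does the work.

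The hard part will be the last step, $C_V(B) = 0$: one must carefully track how $B$ acts on a single homogeneous component $U = V_{\omega_0}$ versus how the full group $A$ acts on $V = \bigoplus_{a \in A/B} V_{\omega_0^a}$, and argue that the transfer map (averaging over the $A$-orbit, or using that $V_B$ contains $V_{\omega_0}$ as a $B$-submodule that generates all of $V$ under $A$) converts a nonzero $B$-fixed vector into a nonzero $A$-fixed vector. One subtlety is that $B$ does not act only on $V_{\omega_0}$ but genuinely on all of $V$, so the relation $C_V(B)=0 \iff C_{V_{\omega_0}}(B)=0$ needs the regularity of the $A$-orbit on $\Omega$ to decouple the components; I expect this is where the hypothesis ``$A$ acts with regular orbits on $G$'' (as opposed to merely ``$C_V(A)=0$'') is genuinely used, and getting the module-theoretic bookkeeping exactly right — particularly the interaction between the $G/N$-regular action and the $A$-action on $\Omega$, and which Hall $r'$-subgroup hypothesis guarantees that $A$ does normalize the relevant structure — is the crux of the argument.
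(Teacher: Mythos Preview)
Your overall strategy matches the paper's: reduce to the non-homogeneous case, use that $G/N$ permutes $\Omega$ regularly, pick a component $U$ whose $A$-stabiliser $B$ equals $C_A(G/N)$ via the regular-orbit hypothesis, and deduce the fixed-point condition from $C_V(A)=0$ via Mackey and Frobenius reciprocity. However, there is a real gap at the point where you assert ``$\Omega \cong G/N$ as $A$-sets.'' As a $G$-set, yes, $\Omega \cong G/N$ via $gN \mapsto gW_0$ for any fixed component $W_0$; but this bijection is $A$-equivariant only when $A$ already stabilises $W_0$. A priori $A$ need not fix any point of $\Omega$, and without such a fixed base point you cannot identify the $A$-stabiliser of a component with a centraliser $C_A(xN)$ in the section $G/N$ --- which is exactly what you need in order to invoke the regular-orbit hypothesis, since that hypothesis concerns irreducible sections of $G$, not arbitrary $A$-sets.

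This is precisely where the Hall $r'$-subgroup hypothesis enters, and the paper devotes four of its six steps to it. With $M = N_{GA}(W_1)$ and $K$ its core in $GA$, one shows $K = C_M(G/N)$ and that in $\overline{GA} = GA/K$ one has $\overline{G} = O_r(\overline{GA})$ with $r'$-index. The $A$-invariant Hall $r'$-subgroup $Q$ of $GA$ then has $\overline{Q}$ a complement to $\overline{G}$, hence conjugate to $\overline{M}$; this forces $A \le QK = M^{x_0}$ for some $x_0$, so $A$ does stabilise $W_0 := W_1^{x_0}$. Only now does the identity $A \cap M_0^{\,x} = C_A(xN)$ hold for $x\in G$, after which the regular-orbit hypothesis produces $x_1$ with $B := N_A(W_0^{x_1}) = C_A(G/N)$. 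At that point $[G,B] \le N$ is immediate (no commutator gymnastics needed), and Mackey's decomposition of $V|_A$ exhibits $(W_0^{x_1}|_B)^A$ as a direct summand, so $C_V(A)=0$ gives the fixed-point conclusion by Frobenius reciprocity. Your closing paragraph correctly flags the Hall hypothesis as the crux --- but you must actually carry out this step, not defer it.
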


\vspace{1mm} \noindent \textit{Proof.} It is useful to proceed in a series
of steps.\newline

\textit{(1) We may assume that $V_{_{N}}=W_1\oplus\ldots \oplus W_s $ with $s>1$ where $%
W_i = m Y_i$, $i=1,\ldots ,s$, for some positive integer $m$ and for
pairwise nonisomorphic irreducible $N$-submodules $Y_1,\ldots , Y_s$.
Furthermore, $G$ acts transitively and $G/N$ acts regularly on $%
\Omega=\{W_1,\ldots ,W_s \}$.}

\begin{proof}
Let $X$ be an irreducible submodule of $V_{_{G}}$. Then $X_{_{N}}$ is
irreducible or homogeneous or a sum of nonisomorphic irreducible submodules
by \cite{isaacs}, 6.18. In the former and the second cases, $V_{_{N}}$ is
homogeneous and we take $V=U$ and $B=A$. We may therefore assume that the
latter holds, that is, $X_{_{N}}=Y_1\oplus \cdots \oplus Y_s $, where $Y_i$
are pairwise nonisomorphic irreducible $N$-submodules. Then $V_{_{N}}= m
Y_1\oplus \cdots \oplus m Y_s $ for some positive integer $m$. Set $W_i=mY_i$%
, $i=1,\ldots,s$. As $V_G$ is homogeneous, we may assume that $G$ acts
transitively on $\Omega=\{W_1,\ldots ,W_s \}$. Hence $GA= N_{GA}(W_i)G$ for
each $i=1,\ldots ,s$. We may also observe that $N=N_G(W_i)$ for each $%
i=1,\ldots ,s$, as $G/N$ is a $GA$-chief factor, that is, $G/N$ acts
regulary on $\Omega$.
\end{proof}

\textit{(2) $G/N$ is an elementary abelian $r$-group for some prime $r$. Set 
$A=A_r\times A_{r^{\prime}}$. Then $A_r$ centralizes $G/N$.}

\begin{proof}
This follows from the irreducibility of $G/N$ as a $GA$-module.
\end{proof}

\textit{(3) Set $M=N_{GA}(W_1)$. Then $G/N$ is centralized by $K=Core_{GA}(M)
$ and hence $K=C_{M}(G/N)$. Note that $N=K \cap G$. }

\begin{proof}
Now $N\leq M$. Notice that $MG=GA$ as $G$ acts transitively on $\Omega$ by 
\textit{(1)}. Then $G\not \leq M.$ We have $N\leq K \cap G \triangleleft GA$
and hence $K \cap G=N$ by the irreducibility of $G/N$ as a $GA$-module. It
follows that $[G,K] \leq N$, that is, $K$ centralizes $G/N$. Then $K \leq L
=C_{M}(G/N)$. Now, $L$ is normalized by $G$ as $[G,L] \leq N\leq L$. $L$ is
also left invariant by $M$. Therefore $L$ is $GA$ -invariant as $GA=MG$.
This shows that $L \leq \bigcap _{x \in GA} M^x =K$ and hence we have the
equality $L=K$, as desired.
\end{proof}

\textit{(4) Set $\overline{GA}= GA/K$. Then $G/N \cong \overline{G}= O_r( 
\overline{GA})$. Furthermore $O_{r,r^{^{\prime}}}( \overline{GA}) = 
\overline{GA}$. }

\begin{proof}
By \textit{(3)}, $N =K \cap G$ and hence $G/N$ is an $r$-group. Then $%
\overline{G}\leq O_r (\overline{GA})$. On the other hand, 
\begin{equation*}
GA/KG=KGA/KG\cong A/A\cap KG
\end{equation*}
and $A_r(A \cap KG) /(A\cap KG)$ is trivial, because 
\begin{equation*}
A_r \leq C_{GA}(G/N)=C_{MG}(G/N)=GC_M(G/N)=KG 
\end{equation*}
by \textit{(3)}. This yields that $\left|GA/KG\right|$ is not divisible by $r
$ and hence $\overline{G}= O_r ( \overline{GA})$. In fact, $%
O_{r,r^{^{\prime}}}(\overline{GA}) = \overline{ GA}$.
\end{proof}

\textit{(5) $A\leq M^{x_0}=M_0$ for some $x_0\in GA$.}

\begin{proof}
Let now $Q$ be a Hall $r^{\prime}$-subgroup of $GA$ normalized by $A$. Then $%
\overline{Q}$ is an $A$-invariant Hall $r^{\prime}$-subgroup of $\overline{GA%
}$ and hence $\overline{Q} O_r(\overline{GA})= \overline{Q} \overline{G}=%
\overline{GA}$ by \textit{(4)}. On the other hand, we also have $\overline{GA%
}= \overline{M}\overline{G}$. Thus $\overline{Q}$ and $\overline{M}$ are conjugate in $\overline{GA}$,
that is, $\overline{Q}=\overline{M}^{ \overline {x_0}}$ for some $\overline{%
x_0}=x_0K$ in $\overline{GA}$. It follows that $M^{x_0}=QK$. Set $M_0=M^{x_0}
$ and $W_0={W_1}^{x_0}.$

Since $\overline{GA}=\overline{Q} \overline{G}$ we see that $N_{\overline{GA}}(\overline{Q})=N_{\overline{G}}(\overline{Q})\overline{Q}=C_{\overline{G}}(\overline{Q})\overline{Q}$. Notice that $C_{\overline{G}}(\overline{Q})=1$ or $\overline{G}$ by the irreducibility of $\overline{G}$ as an $A$-module. In the former case $A\leq N_{GA}(Q)K=QK=M_0$ and hence the claim follows. In the latter case we have $M\unlhd GA$. Hence $M_0=K$ and $\overline{GA}=\overline{G}$ which yields that $\overline{A}=1$. Therefore in any case we have $A\leq M_0$ as claimed. 
\end{proof}

\textit{(6) Theorem follows.}

\begin{proof}
We have $V={W_0}^{GA}$ where $W_0$ is an $M_0$-module. Choose a
subset $\Re$ of $G$ which consists of double coset representatives of $%
(M_0,A)$ in $G$. We observe that $C_A(xN)=A \cap {M_0}^{x}$ for each $x \in
\Re$: To see this, let $b \in C_A(xN)$. Then $b \in {M_0}^{x}$ as $A\leq M_0.
$ Conversely, pick an element $b$ from $A\cap {M_0}^{x}$. Then $[b,
x^{-1}]\in N$, that is, $[b,xN]=1$ which yields the desired equality.

Recall that $A$ acts with regular orbits on $G/N$ by hypothesis. More
precisely, there exists $x_1 \in \Re$ such that $C_A(x_1N)=C_A(G/N)$. Note
that 
\begin{equation*}
V_{_{A}}=\bigoplus_{x \in R} {{W_0}^{x}}|_{_{A \cap {M_0}^{x}}}|^{A}=%
\bigoplus _{x \in R} W_0^{x}|_{_{C_A(xN)}}|^A
\end{equation*}
by Mackey's theorem. Taking now $U={W_0}^{x_1}$ and $B=C_A(G/N)=A\cap {M_0}%
^{x_1}$, we see that $U_{_{B}}|^{A} \subseteq V_{_{A}}$ and hence $C_V(A)=0$
implies $C_U(B)=0$. Consequently $B \neq 1$ with $[G,B]\leq N\leq N_G(U)$
proving the theorem.
\end{proof}

Under the assumption that the action of $A$ on $G$ is good, the above
theorem would imply that $G=N_G(W)C_G(B)$. The following example shows that
we can not expect this equality without the assumption of ``goodness" of the
action.

\vspace{2mm}

\begin{example}
Let $\left\langle \sigma\right\rangle\cong \mathbb{Z}_9$ and $\alpha \in Aut
\left\langle \sigma\right\rangle$ given by $\alpha(\sigma)=\sigma^{4}$. Then
the group $S=\left\langle \sigma, \alpha\right\rangle$ is extraspecial of
order $27$ and of exponent 9 where $Z(S)=\left\langle \sigma^{3}\right\rangle
$. Let $R$ be a group isomorphic to $\mathbb{Z}_7$. Then $S$ acts on $R$ so
that $\left\langle \sigma^{3},\alpha\right\rangle$ forms the kernel of this
action. Put $G=R\left\langle \sigma\right\rangle$ and $N=R Z(S)$. Now $%
F=G/Z(S)$ is a Frobenius group of order $21$, acted on by $A=\left\langle
\alpha\right\rangle$. Here, $[G,A] \leq Z(S)$. $F$ has an irreducible
faithful character $\chi$ with $\chi(1)=3$. Considering $\chi$ as an
irreducible character of $G$ with kernel $Z(S)$, we see that  $%
\chi_{_{N}}=\theta_1 + \theta_2 + \theta_3 $, a sum of distinct irreducible
characters $\theta_1 , \theta_2 , \theta_3 $, each of which is fixed by $%
\alpha$. Let $V$ be a complex $GA$-module and $W_1$ be a complex $N$-module
affording $\chi$ and $\theta_1$, respectively. It is obvious that $G \neq
N_G(W_1) C_G(A)=N$ as $N_G(W_1)=N=C_G(A)$. Note also that $C_V(A)$ is $GA$%
-invariant as $[G,A]\leq Ker(G$ on $V)$. Therefore $C_V(A)=0 $ as $V$ is an
irreducible $GA$-module.
\end{example}

\begin{theorem}
Let $G$ be a solvable group on which a nilpotent group $A$ acts with regular
orbits. Suppose that the action of $A$ on $G$ is good. Let V be a complex $GA
$-module such that $V_G$ is homogeneous, $C_V(A)=0$ but $C_V(A_0)\ne 0$ for
every proper subgroup $A_0$ of $A.$ Let $N\lhd GA$ such that $[Z(N),A]$ acts
nontrivially on $V$ and let $W$ be a homogeneous component of $V_{_{N}}$.
Then $W$ is $A$-invariant and $[G,A]\leq N_G(W)$, that is, $G=N_G(W)C_G(A).$
\end{theorem}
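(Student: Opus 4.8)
The plan is to reduce Theorem 3.3 to Theorem 3.1 by verifying the hypotheses of the latter for a suitable $GA$-chief factor sitting above or below $N$. First I would dispose of the trivial situation: if $V_N$ is already homogeneous then $W = V_N$, $N_G(W) = G$, and there is nothing to prove, so I may assume $V_N$ is not homogeneous and in particular $N \neq G$ (and $N \not\leq C_G(A)\cap(\text{something acting trivially})$, since $[Z(N),A]$ acts nontrivially on $V$). The essential point is that $[Z(N),A] \neq 1$ forces, via the goodness of the action and Proposition 2.5 applied prime by prime to the nilpotent group $A$, that the relevant chief factor of $G/N'$ on which the obstruction lives is an $r$-group with $A$ normalizing a Hall $r'$-subgroup of the overgroup — this is exactly the hypothesis ``$A$ normalizes a Hall $r'$-subgroup of $GA$'' needed to invoke Theorem 3.1. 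Here Proposition 2.6 is crucial: since the action of $A$ on $G$ is good and $G$ is solvable, $A$-invariant Hall subgroups of $G$ (and hence, together with $A$, of $GA$) exist for every set of primes, so the Hall-normalization hypothesis of Theorem 3.1 is automatic in our setting.

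Next I would set up the inductive framework. Pick a $GA$-chief factor $G/M$ with $N \leq M < G$; by Theorem 3.1 there is a homogeneous component $U$ of $V_M$ and a subgroup $B \leq N_A(U)$ with $C_V(B) = 0$ and $[G,B] \leq N_G(U)$. Since $C_V(A_0) \neq 0$ for every proper $A_0 < A$, the condition $C_V(B) = 0$ forces $B = A$. Thus $A$ normalizes a homogeneous component $U$ of $V_M$ and $[G,A] \leq N_G(U)$. Now I would descend from $M$ to $N$: the group $MA$ acts on $U$ with $U_M$ homogeneous, the action of $A$ on $M$ is still good (Proposition 2.2(1)), $A$ still acts with regular orbits on $M$, and $C_U(A) = 0$ (because $C_V(A) = 0$ and $U|^{GA}$ is a direct summand of $V$ by Mackey, so a nonzero $A$-fixed vector in $U$ would produce one in $V$). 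By induction on $|G|$ — replacing $G$ by $M$ — applied to the normal subgroup $N \lhd MA$, whose center still satisfies $[Z(N),A]$ acting nontrivially on $U$ (this needs checking: $[Z(N),A]$ acts nontrivially on $V$, and $V = U|^{GA}$, so it acts nontrivially on $U$), we get that a homogeneous component $W$ of $U_N$ is $A$-invariant with $[M,A] \leq N_M(W)$. Combining, $W$ is a homogeneous component of $V_N$ (up to $G$-conjugacy we may take this very $W$), it is $A$-invariant, and $[G,A] = [G,A]$ normalizes $W$: indeed $[G,A] \leq N_G(U)$ and within $N_G(U)$ the piece $[G,A] \cap (\text{relevant part})$ normalizes $W$ by the inductive conclusion together with Proposition 2.2(2) ($[H,A,A] = [H,A]$) to bootstrap from $[M,A]$ to $[G,A]$. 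Finally, $G = N_G(W)C_G(A)$ is just the ``good action'' equality $G = [G,A]C_G(A)$ combined with $[G,A] \leq N_G(W)$.

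The main obstacle, I expect, is the descent step: transferring the hypothesis $[Z(N),A]$ \emph{acts nontrivially on} $V$ correctly through the induction and, more delicately, getting from $[M,A] \leq N_M(W)$ all the way up to $[G,A] \leq N_G(W)$. The subtlety is that $N_G(U)$ may be strictly larger than $M$, so the inductive hypothesis a priori only controls the part of $[G,A]$ lying in $M$; one must use that $[G,A] = [G,A,A]$ (Proposition 2.2(2)) and that $[G,A] \leq N_G(U)$ to argue that $[G,A]$ acts on the set of $N$-homogeneous components of $U$ through a quotient on which $A$ acts, then use $C_V(A) = 0$ / minimality of $A$ once more to pin down a single $A$-fixed component. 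The case analysis on whether $V_N$ versus $V_M$ is homogeneous, and whether $N_G(U)$ equals $G$, will need to be organized carefully so the induction terminates, but none of it should require computation beyond Mackey decomposition and the structural facts from Section 2.
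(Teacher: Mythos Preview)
Your overall architecture matches the paper's: apply Theorem~3.1 to a chief factor $G/M$ with $N\le M$, deduce $B=A$ from the minimality hypothesis on $A$, then induct on $|G|$ by passing to $MA$ acting on a homogeneous component $U$ of $V_M$. You also correctly flag Proposition~2.6 as the source of the Hall-$r'$ hypothesis in Theorem~3.1.

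There is, however, a genuine gap in how you organize the descent. You treat the passage from $[M,A]\le N_M(W)$ to $[G,A]\le N_G(W)$ as something to be patched via $[G,A]=[G,A,A]$ plus ``$[G,A]$ acts on the components of $U_N$ through a quotient,'' but this does not work as stated. The paper instead exploits that $G/M$ is a chief factor together with $M\le N_G(U)\le G$, forcing the dichotomy $N_G(U)=M$ or $N_G(U)=G$. These two cases play completely different roles. When $N_G(U)=M$ one has $[G,A]\le M$ and hence $[G,A]=[G,A,A]\le [M,A]$, so the inductive conclusion $[M,A]\le N_M(W_1)$ immediately gives $[G,A]\le N_G(W)$. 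When $N_G(U)=G$, i.e.\ $V_M$ is homogeneous, one does \emph{not} reach the conclusion directly; instead this case is eliminated by contradiction, and this is precisely where the hypothesis that $[Z(N),A]$ acts nontrivially on $V$ is used. Your sketch never uses that hypothesis in an essential way.

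The mechanism you are missing in both branches is the same: because the action is good, $G=MC_G(A)$ (resp.\ $M=N_M(W)C_M(A)$), so one may choose a transversal for $M$ in $G$ (resp.\ for $N_M(W)$ in $M$) lying in $C_G(A)$. Writing $V_M=\bigoplus_{s}U^s$ with $s\in C_G(A)$ makes every $U^s$ $A$-invariant and $A$-isomorphic to $U$; this is what lets you transfer the conditions $C_U(A_0)\ne 0$ and ``$[Z(N),A]$ nontrivial on $U$'' back and forth with the corresponding conditions on $V$. In particular you omitted verifying $C_U(A_0)\ne 0$ for every proper $A_0<A$ before invoking the inductive hypothesis, and your argument that $[Z(N),A]$ is nontrivial on $U$ (``$V=U|^{GA}$'') is not correct without the transversal-in-$C_G(A)$ device. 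In the homogeneous case the same device, applied with a transversal $T\subseteq C_G(A)$ for $N_M(W)$ in $M$, shows that each $W^t$ is $A$-invariant, whence $[Z(N),A]\le\bigcap_t\mathrm{Ker}(W^t)=\mathrm{Ker}(V)$, the desired contradiction.
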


\begin{proof}
Let $M$ be a normal subgroup of $G$ containing $N$ such that $G/M$ is a $GA$%
-chief factor of $G$. By Theorem 3.1 there exists a homogeneous component $U$
of $V_{_{M}}$ and a subgroup $B$ of $A$ such that $B\leq N_A(U)$, $C_V(B)=0$
and $[G,B]\leq N_G(U)$. By hypothesis we have $B=A$ and hence $[G,A]\leq
N_G(U)$.

As $M\leq N_G(U)\leq G$ we get either $M=N_G(U)$ and $N_G(U)=G$. Suppose
first that $N_G(U)=G$. Then $U=V$, that is $V_{_{M}}$ is homogeneous. Let $W$
be a homogeneous component of $V_{_{N}}$. By induction, applied to the
action of $MA$ on $V$, we get $[M,A]\leq N_G(W).$ Since the action is good,
this yields $M= N_G(W)C_M(A).$ Let now $T$ be a transversal for $N_G(W)$ in $%
M$. Clearly we may assume that $T\subseteq C_G(A).$ Now $V_{_{N}}=%
\bigoplus_{t\in T}W^t.$ Therefore for each $t\in T$, $W^t$ is $A$-invariant
whence 
\begin{equation*}
[Z(N),A]\leq \bigcap_{t\in T} Ker(W^t)=Ker(V).
\end{equation*}
This contradiction shows that $V_{_{M}}$ is not homogeneous. Thus we have $%
M=N_G(U)$ and hence $[G,A]\leq M,$ that is $G=MC_G(A)$ as the action is
good. Let now $S$ be a transversal for $M$ in $G.$ We may assume that $%
S\subseteq C_G(A)$ and hence $V_{_{M}}=\bigoplus_{s\in S}U^s.$ It also
follows that $U^s$ is $A$-invariant for all $s\in S$ as $U$ is $A$%
-invariant. If $[Z(N),A]$ is trivial on $U$ then it is trivial on $U^s$ for
all $s\in S$ as $S\subseteq C_G(A)$ and hence on $V$. Therefore $[Z(N),A]$
is nontrivial on $U$. Note that if $C_U(A_0)=0$ for some proper subgroup $A_0
$ of $A$ then $C_{U^s}(A_0)=0$ for all $s\in S$ and so $C_V(A_0)=0$, which
is not the case. Let $W_1$ be a homogeneous component of $U_{_{N}}$ such
that $W_1\subseteq W.$ By applying induction to the action of $MA$ on $U$ we
observe that $W_1$ is $A$-invariant and that $[M,A]\leq N_M(W_1)$. It
follows that $W$ is $A$-invariant and 
\begin{equation*}
[G,A]=[M,A]\leq N_G(W_1)\leq N_G(W).
\end{equation*}
This completes the proof.
\end{proof}

\section{fixed point free good action}

\begin{remark}
As an immediate consequence of Theorem 2.10 we observe that if $A$ is a
nilpotent group acting fixed point freely on the solvable group $G$ and the
action is good, then $h(G)\leq 4\ell(A)$. The next theorem improves this
bound.
\end{remark}

\begin{theorem}
Let $G$ be a solvable group and $A$ be a nilpotent group acting fixed point
freely on $G$. If the action is good, then $h(G)\leq 2\ell(A)$.
\end{theorem}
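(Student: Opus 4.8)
The plan is to induct on $|G|$ and reduce to the situation of an irreducible $A$-tower of height $h=h(G)$, exactly as in the proof of Theorem~2.9, using Remark~2.8 (which is available because the action is good, hence $A$-invariant Hall subgroups exist by Proposition~2.6). So we may assume $G=\prod_{i=1}^h S_i$ is an irreducible $A$-tower with sections $P_i=S_i/T_i$. Since $C_G(A)=1$, Proposition~2.5 (applied prime by prime, using that each $A_p$ also acts goodly by Proposition~2.2) forces every $p_i$ to be coprime to $|A|$ — more precisely, for each prime $p\mid|A|$, the subgroup $[G,A_p]$ is a $p'$-group; combined with $C_G(A)=1$ one gets $[S_i,A]=S_i$ for all $i$ and $p_i\nmid|A|$. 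Thus the whole tower is a $\pi(A)'$-group on which $A$ acts fixed-point-freely and goodly.

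Next I would set up the Turull-type ``collapse'' of the tower. The point of the factor $2$ (rather than the $4$ of Remark~4.1, which comes from Theorem~2.9's crude use of Theorem~3.1 in \cite{Tur}) is to feed the new Hall–Higman theorem, Theorem~3.3, into the tower to kill \emph{two} tower-levels per prime of $A$. Concretely: pick $B\lhd A$ with $A/B$ of prime order $p$. By induction the $B$-tower collapses to height $\le 2\ell(B)=2\ell(A)-2$ after replacing $G$ by the relevant $B$-fixed-point data; then one must show the remaining $A/B$-action removes at most $2$ more levels. For this, take the top of the tower, build from it in the natural way a complex module $V$ for the relevant section with $V$ homogeneous over the base group, $C_V(A)=0$ but $C_V(A_0)\ne0$ for proper $A_0$ (this last by minimality/irreducibility of the tower together with condition~(8) of Definition~2.7), and apply Theorem~3.3 with $N$ the appropriate normal subgroup so that $[Z(N),A]$ acts nontrivially on $V$. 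Theorem~3.3 yields $G=N_G(W)C_G(A)=N_G(W)$, i.e.\ $A$ normalizes a homogeneous component $W$ of $V_N$; transporting this back to the tower shows the $A$-action is ``concentrated'' in a way that lets us splice out two consecutive $S_i$'s from the top while keeping the $A$-tower property (conditions (1)–(3) of Definition~2.7), so the height drops by at least $2$.

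I would then iterate this: each of the $\ell(A)$ primes of $A$, peeled off via a normal subgroup of prime index, removes $2$ from the tower height, giving $h=h(G)\le 2\ell(A)$ once the tower has collapsed to height $0$. A clean way to organize the bookkeeping is to prove by induction on $\ell(A)$ the statement ``if $A$ acts goodly and fixed-point-freely on the solvable $G$ then any $A$-tower in $G$ has height $\le 2\ell(A)$'', using at the inductive step: (i) Proposition~2.2 to pass goodness to sections and quotients, (ii) Proposition~2.5 to guarantee the top sections are $p'$ for $p=|A:B|$, and (iii) Theorem~3.3 as above to lose the $2$ levels corresponding to $A/B$.

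The main obstacle I anticipate is step two — verifying that the module-theoretic conclusion of Theorem~3.3 genuinely translates into removing \emph{two} tower levels while preserving an $A$-tower structure, rather than just one. This is the analogue of the delicate case analysis in Theorem~2.9 (where $p_k$ vs.\ $p_{k\pm1}$ coincidences cost extra levels), but now one must check that goodness plus regular orbits plus the ``$C_q\wr C_q$-free'' hypothesis eliminate exactly the bad configurations, so that the loss is uniformly $2$. In particular one has to be careful about the level $P_k$ where $[P_k,A]=1$ fails/holds, and about whether $[Z(N),A]$ really acts nontrivially on the chosen $V$ (if it acted trivially, Theorem~3.3 gives nothing and one must instead argue the tower was already short). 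Setting up $N$ and $V$ so that this non-triviality is automatic — presumably by choosing $N$ minimal with $[N,A]\ne1$ inside the relevant section, and $V$ an appropriate faithful constituent — is the technical heart of the argument, and is where I'd expect to spend most of the work.
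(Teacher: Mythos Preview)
Your proposal has a genuine gap: you are invoking hypotheses that Theorem~4.2 does not have. Theorem~3.3, which is the engine of your second and third paragraphs, requires that $A$ act \emph{with regular orbits} on $G$; and later you also appeal to the ``$C_q\wr C_q$-free'' condition. Neither is assumed here --- those hypotheses belong to Theorem~4.5, not to Theorem~4.2. So the entire ``collapse two levels per prime via Theorem~3.3'' scheme cannot be run under the stated hypotheses.

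There is also a problem earlier. Your claim that ``every $p_i$ is coprime to $|A|$'' is not justified: Proposition~2.5 only gives $[S_i,A_p]=1$ when $p_i=p$, which does not force $S_i=1$ (the other Sylow pieces of $A$ can still move $S_i$). And note that if you \emph{could} reduce to a $\pi(A)'$-tower, the action would be coprime and Turull's Corollary~3.2 in \cite{Tur} would already give $h(G)\le 2\ell(A)$ outright --- so the whole Hall--Higman apparatus would be superfluous anyway.

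The paper's proof takes a completely different and much shorter route, using no towers and no Theorem~3.3. One inducts on the number $k$ of primes dividing $|A|$. If some prime $p_j\in\pi(A)$ is missing from $\pi(G)$, then $A_j$ acts coprimely; induction on $C_G(A_j)$ (for the action of $A/A_j$) together with Turull's coprime bound $h(G)\le h(C_G(A_j))+2\ell(A_j)$ finishes. If instead $\pi(A)\subseteq\pi(G)$, Proposition~2.5 shows each $A_i$ centralizes $G/O_{p_i'}(G)$; setting $N=\prod_i O_{p_i'}(G)$, the previous case bounds $h(O_{p_i'}(G))\le 2\ell(A)$, hence $h(N)\le 2\ell(A)$, while $A$ centralizes $G/N$ so $G=N$ by fixed-point-freeness. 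That is the whole argument.
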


\begin{proof}
Suppose that $|A|=\prod_{i=1}^{k}{p_i}^{r_k}$. We use induction on $k$. If $%
k=1$, then $(|G|, |A|) = 1$ and the result is well-known by Corollary 3.2 in 
\cite{Tur}. Suppose that $p_j\notin \pi(G)$ for some $j\in \{1, 2,\ldots ,
k\}.$ Then the Sylow $p_j$-subgroup $A_j$ of $A$ is a group of automorphisms of $G$ of coprime order. By
inductive hypothesis and Corollary 3.2 in \cite{Tur} we have 
\begin{equation*}
h(C_G(A_j ))\leq 2\ell(A/A_j ).
\end{equation*}
Then the same corollary in \cite{Tur} implies that 
\begin{equation*}
h(G)\leq h(C_G(A_j )) + 2\ell(Aj )\leq 2\ell(A/Aj ) + 2\ell(Aj ) = 2\ell(A).
\end{equation*}
Hence one can suppose that $\{p_1, p_2,\ldots , p_k\}\subseteq \pi(G)$. By
Proposition 2.5 the group $A_i$ centralizes $G/O_{{p_i}^{\prime}}(G)$ and we
have $h(O_{{p_i}^{\prime}}(G))\leq 2\ell(A)$ by the previous argument. Let $N
=\prod_{i=1}^k O_{{p_i}^{\prime}}(G)$. Then $h(N) = \max{\{h(O_{{p_i}%
^{\prime}}(G)) : i=1,\ldots ,k \}}\leq 2\ell(A)$. Moreover $G/N$ is
centralized by every $A_i$, and so it is centralized by $A$, that is $G = N.$
\end{proof}

\begin{remark}
It can be seen that Theorem A in \cite{EG} can be extended as follows: Let $A
$ be an abelian group of squarefree exponent coprime to $6$ acting fixed
point freely on a group $G$ whose Sylow $2$-subgroups are abelian. Then $%
h(G)\leq \ell(A)$. The following theorem is similar to this result in the
sense that the assumption $(|A|,6)=1$ can be replaced by the goodness of the
action.
\end{remark}

\begin{theorem}
Let $G$ and $A$ be solvable groups such that the action of $A$ on $G$ is
good. Suppose that $G$ has abelian Sylow $2$-subgroups and that $A$ is
abelian of squarefree exponent. If $C_G(A)=1$ then $h(G)\leq \ell(A)$.
\end{theorem}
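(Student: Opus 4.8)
The plan is to argue by induction on $|G|\cdot|A|$; let $(G,A)$ be a counterexample with $|G|\cdot|A|$ minimal, so that $h(G)\ge\ell(A)+1$ while the theorem holds for every pair with strictly smaller product of orders. First the standard reductions. Since the action is good and $C_G(A)=1$, Proposition 2.2 gives $G=[G,A]$. For any nontrivial $A$-invariant normal subgroup $N$ of $G$, the action of $A$ on $G/N$ is again good (Proposition 2.2(4)) and fixed point free, since $C_{G/N}(A)=C_G(A)N/N=1$ by Proposition 2.2(3); as $G/N$ is solvable with abelian Sylow $2$-subgroups and $A$ is still abelian of squarefree exponent, minimality forces $h(G/N)\le\ell(A)$, so in fact $h(G/N)=\ell(A)=h(G)-1$ for every such $N$. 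By Proposition 2.6 and Remark 2.8 fix an irreducible $A$-tower $(S_i)_{i=1}^{h}$ of height $h=h(G)$ in $G$; after the usual reductions we may assume $F(G)=S_1$, a self-centralizing elementary abelian $p_1$-group on which $A$ acts irreducibly with $C_{S_1}(A)=1$, so $\overline G:=G/S_1$ acts faithfully on $S_1$ and $h(\overline G)=h-1=\ell(A)$. Note finally that every section of $G$ that is a $2$-group is abelian, so $G$ has no section isomorphic to $C_2\wr C_2$; this is the only use of the abelian-Sylow-$2$ hypothesis, and it is exactly what lets the Hall--Higman type theorems of Section 3 be invoked, at the prime $2$, as in the coprime case.

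The core of the proof is to manufacture from $A$ a \emph{proper} subgroup $B$ that still acts goodly and fixed point freely on a section of $G$ of Fitting height at least $h-1$. Granting this, $\ell(B)\le\ell(A)-1$ and minimality applied to that section yields $h-1\le\ell(B)\le\ell(A)-1$, a contradiction. To extract $B$ I would pass to a complex $GA$-module $V$ afforded by an irreducible character lying over a nonprincipal linear character of $F(G)=S_1$, reduce over the non-homogeneous layers of the restrictions of $V$ to reach a $G$-homogeneous situation, and choose $A$ of minimal order among counterexamples so that $C_V(A)=0$ while $C_V(A_0)\neq0$ for every proper $A_0\le A$; this is the hypothesis configuration of Theorem 3.3. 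Here the hypotheses of the theorem are used precisely to make Theorem 3.3 applicable: every subgroup of $A$ is again abelian of squarefree exponent, in particular $C_q\wr C_q$-free, and, together with the absence of $C_2\wr C_2$-sections in $G$, this forces $A$ to act with regular orbits on the $G$-sections that arise -- the potential obstructions being certain small local configurations which these hypotheses exclude (cf. the Bell--Hartley example \cite{Bell}). Theorem 3.3 then returns an $A$-invariant homogeneous component $W$ of $V_N$ for a suitable $N\lhd GA$ with $[G,A]\le N_G(W)$, hence $G=N_G(W)C_G(A)$ by goodness; matching the Clifford-theoretic stabilizer against conditions (5)--(8) of Definition 2.7, as in the coprime treatment of \cite{Turfix}, produces the desired proper $B$ and the section of Fitting height $\ge h-1$ on which it acts.

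I expect this middle step to be the main obstacle, and it splits into two intertwined difficulties. The first is the module-theoretic setup and the verification of the hypotheses of Theorem 3.3: one must arrange $V_G$ homogeneous, use the minimality of $|A|$ for the condition on proper subgroups, and keep exact track of the fact that the Fitting height of the surviving section drops by at most one -- the delicate bookkeeping that is carried out in \cite{Turfix} in the coprime case. The second, and the more sensitive one in the present noncoprime setting, is the regular-orbit input: one needs a clean statement that an abelian group of squarefree exponent acts with regular orbits on the $G$-sections occurring in the argument, and the abelian-Sylow-$2$ hypothesis is exactly the lever that removes the exceptional $2$-local behaviour, so the induction must be organised so that whenever the prime $2$ intervenes -- as the bottom prime $p_1$, or as a divisor of $|A|$ -- the relevant $2$-subgroups of $G$ and the Sylow $2$-subgroup of $A$ are genuinely elementary abelian. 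Once these two points are handled, the induction closes and the choice of $(G,A)$ is contradicted.
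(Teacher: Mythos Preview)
Your proposal has a genuine gap, and it is exactly the point you yourself flag as ``the more sensitive one'': you assert that an abelian group of squarefree exponent acts with regular orbits on the relevant $G$-sections once Sylow $2$-subgroups of $G$ are abelian, but you never prove this, and the paper does not prove it either. In fact, regular orbits appear as a \emph{hypothesis} in Theorem~4.5 (condition~(c)), not as a consequence of the other assumptions; Theorem~3.3 is simply unavailable without that input. Your handwave that ``the potential obstructions being certain small local configurations which these hypotheses exclude'' is not a proof, and the Bell--Hartley reference is about nonnilpotent $A$, which is irrelevant here. The rest of your outline (complex lift, Clifford reduction, Theorem~3.3, bookkeeping \`a la \cite{Turfix}) all sits on top of this unproved claim, so the induction never closes.

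The paper's argument is entirely different and much shorter; it avoids Section~3 and regular orbits altogether. One inducts on $\ell=\ell(A)$. Since $A$ is abelian of squarefree exponent, for each prime $p\in\pi(A)$ one splits $A=B\oplus\langle\alpha\rangle$ with $\alpha$ of order~$p$. By Proposition~2.5 the good action forces $[G,\alpha]$ to be a $p'$-group, so $\alpha$ acts \emph{coprimely} on $[G,\alpha]$; since $[G,\alpha]$ inherits abelian Sylow $2$-subgroups, Kurzweil's Satz~3 in \cite{K} gives $h([G,\alpha])\le h(C_{[G,\alpha]}(\alpha))+1$. Now $B$ acts (goodly and fixed point freely) on $C_G(\alpha)$, so induction yields $h(C_G(\alpha))\le\ell-1$, whence $h([G,\alpha])\le\ell$. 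Doing this for every nonidentity $a\in A$ and setting $N=\prod_{1\ne a\in A}[G,a]$ gives $h(N)\le\ell$; but $A$ is trivial on $G/N$, and $C_{G/N}(A)=1$ by Proposition~2.2(3), so $G=N$. The abelian-Sylow-$2$ hypothesis is used only through Kurzweil's theorem, not through any regular-orbit or $C_2\wr C_2$-free considerations.
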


\begin{proof}
We proceed by induction on $\ell=\ell(A)$. The claim is well known in case
where $\ell= 1$. Thus we may assume that $\ell >1$. Let $p\in \pi(A)$ and
let $\alpha$ be a $p$-element of $A$. Then, by the fundamental result on the
structure of abelian groups, there is $B\leq A$ such that $A = B\oplus
\langle \alpha \rangle $. Let $C = C_G(\alpha)$. By induction applied to the
action of $B$ on $C$ we deduce that $h(C)\leq \ell-1$. On the other hand
appealing to Satz $3$ in \cite{K} we get $h([G,\alpha])\leq \ell.$ Set $N
=\prod_{1\ne a\in A}[G,a]$. Clearly, $N$ is a normal subgroup of $G$ with $%
h(N)\leq \ell.$ Since $A$ acts trivially on $G/N$ we have $G = N.$
\end{proof}

The rest of this section is devoted to the proof of the main result of this
paper.

\begin{theorem}
Let a finite nilpotent group $A$ of odd order which is $C_q\wr C_q$-free
for every prime $q$ act on the finite group $G$. Suppose that
\begin{itemize}
\item[(a)] $C_G(A) = 1$;
\item[(b)] the action of $A$ on $G$ is good;
\item[(c)] every subgroup of $A$ acts on $G$ with regular orbits;
\item[(d)] there is a subgroup $B$ of $A$ such that $\bigcap_{a\in A} \left[ G,B \right]^a =1$.
\end{itemize} 
Then $h(G)\leq \ell(A:B)$ where $%
\ell(A:B)$ is the number of prime divisors of $|A:B|$ counted with
multiplicities.
\end{theorem}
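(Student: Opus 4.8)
The plan is to argue by induction on $\ell(A:B)$, reducing to a minimal counterexample $G$ and then extracting an irreducible $A$-tower of maximal height $h=h(G)$, which exists by Remark 2.8 since the action is good and Proposition 2.6 supplies $A$-invariant Sylow subgroups. We may assume $B$ is chosen as small as possible subject to (d); the base case $\ell(A:B)=0$ forces $[G,B]=1$ hence $G=C_G(A)\cdot[G,B]$-type considerations (using goodness) give $G=C_G(A)=1$. For the inductive step, pick a subgroup $B_1$ with $B<B_1\leq A$ and $B_1/B$ of prime order $p$; we want to apply the inductive hypothesis to the action of $B_1$ on $G$ (with its own intersection-of-commutators hypothesis holding for $B$), obtaining a normal $B_1$-invariant subgroup on which $h$ is controlled by $\ell(B_1:B)=1$, and then handle the quotient action of $A/B_1$. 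The key reductions will be the standard ones for this kind of theorem: pass to $\overline G=G/\Phi$-type quotients, replace $G$ by a single Fitting factor, and arrange that $A$ acts faithfully and irreducibly on the top factor of the tower.

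The heart of the argument is the Hall–Higman-type input of Section 3. On an irreducible $A$-tower $(S_i)$ of height $h$ with top factor $P_h$ and the faithful action of $A$ (or a relevant subgroup) on a suitable complex module $V$ built from $P_h$, we use that every subgroup of $A$ acts with regular orbits (hypothesis (c)) together with goodness to invoke Theorem 3.3: if $C_V(A)=0$ but $C_V(A_0)\neq 0$ for all proper $A_0<A$, then for any $N\lhd GA$ with $[Z(N),A]$ acting nontrivially on $V$ and any homogeneous component $W$ of $V_N$, we get $G=N_G(W)C_G(A)$. The point is that this lets us descend one prime at a time through the tower, each descent consuming exactly one prime divisor of $|A|$, and the condition (d) — that some $B$-conjugates of $[G,B]$ intersect trivially — is what guarantees the descent terminates after only $\ell(A:B)$ steps rather than $\ell(A)$ steps: the subgroup $B$ ``does no work'' because $\bigcap_a[G,B]^a=1$ means $B$ cannot support a nontrivial tower segment below the part controlled by $A/B$. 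The $C_q\wr C_q$-freeness and oddness of $A$ are exactly the hypotheses needed to rule out the exceptional configurations in the Hall–Higman analysis (as in \cite{Turfix}), ensuring each prime of $A$ genuinely reduces the tower height by one.

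The main obstacle will be the bookkeeping in the inductive descent: matching the filtration of $A$ by the subgroups between $B$ and $A$ against the Fitting series of $G$, and verifying at each stage that the module-theoretic hypotheses of Theorem 3.3 (homogeneity of $V_G$, $C_V(A)=0$, minimality of $A$ with respect to fixed points) can be arranged after passing to an appropriate section. In particular one must check that the "good action" property is inherited by all the sections encountered (guaranteed by Proposition 2.2) and that hypothesis (d) survives the reduction — i.e., that after quotienting, the image of $B$ still has the trivial-core-of-commutator property, or can be replaced by a suitable subgroup that does. I expect the proof to follow the structure of Theorem 2.2 in \cite{Turfix} closely, with Theorem 3.3 substituting for the coprime regular-orbit lemma used there, and with the extra care needed because $[G,B]$ need not be complemented or Hall in the noncoprime setting; controlling $[G,B]$ and its conjugates, and showing the tower cannot "hide" height inside $\bigcap_a[G,B]^a$, is where the real work lies.
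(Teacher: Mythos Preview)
Your inductive scheme has a genuine gap. You propose to pick $B_1$ with $B<B_1\le A$ and $|B_1:B|$ prime, then ``apply the inductive hypothesis to the action of $B_1$ on $G$'' and afterwards ``handle the quotient action of $A/B_1$.'' But hypothesis (a) requires the acting group to be fixed-point-free, and $C_G(B_1)$ need not be trivial; so the theorem does not apply to $B_1$ on $G$. Nor does $A/B_1$ act on anything useful here, since $B_1$ is not normal in $A$ in general and $G$ has no natural $A/B_1$-quotient. The paper's induction runs the other way: one keeps $A$ fixed and \emph{enlarges} $B$. Concretely, for any $C$ with $B<C\le A$, if $P=S_{h-1}$ fails to satisfy $P=\langle[P,C]\rangle^{X}$ one passes to $\bar X=X/\bigcap_{a\in A}[X,C]^a$ and applies induction to the action of $A$ on $\bar X$ with $C$ in place of $B$, obtaining $h-1=h(\bar X)\le\ell(A:C)<\ell(A:B)$, a contradiction. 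This is how hypothesis (d) is consumed.

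The second gap is the endgame. You are right that Theorem 3.3 is the Hall--Higman input, but it is not applied to $(G,A)$ --- with $C_G(A)=1$ the conclusion $G=N_G(W)C_G(A)$ would be trivial and uninformative. Instead one first builds, via Fong--Swan, an irreducible complex $XB_1$-module $M$ with $C_M(B)=M$, $C_M(B_1)=0$, and $P\not\le\operatorname{Ker}(X$ on $M)$; then chooses $C$ with $B\le C\le B_1$ minimal subject to $C_{M_1}(C)=0$ on a homogeneous piece $M_1$. Theorem 3.3, applied to $(X,C)$ on $M_1$, forces $[Z(\bar P),C]=1$, and from this one extracts an $X_1C$-invariant extraspecial section $E$ of $\bar P$ with $C_C(E)\le B$ and $(|E|,|C/C_C(E)|)=1$. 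The contradiction then comes from Espuelas's regular-orbit theorem on symplectic modules (Lemma 2.1 of \cite{Esp}), which yields $C_U(C/C_C(E))\ne 0$; \emph{this} is exactly where the odd-order and $C_q\wr C_q$-free hypotheses on $A$ are used, not in Theorem 3.3 itself. Your sketch does not reach this extraspecial/symplectic step, and without it there is no mechanism to force the final contradiction.
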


\begin{proof} We proceed by induction on $%
|G|+|A|+\ell(A:B).$ Set $h=h(G).$ The group $G$ is solvable by \cite{Belyaev}%
. As in the proof of Theorem 2.9 we see the existence of a sequence of
sections $P_1, \ldots, P_h$ of $G$ with $P_i = S_i / T_i$ where $S_i$ and $%
T_i$ are $A$-invariant subgroups of $G$ satisfying conditions (1)-(8) of
Definition 2.7. It should be noted that we may assume that $T_h = 1$ and $%
S_h \leq F (G)$.

To simplify the notation we set $V=P_h$ and $P=S_{h-1}$. By induction we
have $G=VPS_{h-2}\ldots S_1.$ Then we may assume that $\Phi(V)=1=T_{h-1}=1$
by corresponding induction arguments. Set now $X=PS_{h-2}\ldots S_1.$ By (8)
of Definition 2.7 $V$ is an irreducible $XA$-module. We shall proceed in a
series of steps:\newline

\textit{(1) $A$ acts faithfully on $G$, $A_1=C_A(P)\leq B$ and $(|P|,|A:B|)=1
$.}

\begin{proof}
By induction applied to the action of $A/Ker(A$ on $G)$ on $G$ with respect
to the subgroup $BKer(A$ on $G)/Ker(A$ on $G)$ we get $h\leq \ell(A:BKer(A$
on $G))$ which yields that $Ker(A$ on $G)\leq B$. Therefore we may assume
that $Ker(A$ on $G)=1$.

We can observe that $A_1=C_A(P)$ centralizes all the subgroups $P,
S_{h-2},\ldots ,S_1$ due to good action: Firstly we have $%
[S_{h-2}/T_{h-2},A_1]=1$ by the three subgroups lemma, whence $%
[S_{h-2},A_1]=1$ by Proposition 2.2 (3). Repeating the same argument we get
the claim.

Clearly $A_1\lhd A$. If $A_1\not \leq B$, by induction applied to the action
of $A/A_1 $ on the group $PS_{h-2}\ldots S_1$ with respect to the subgroup $%
BA_1/A_1$ we have $h-1\leq \ell(A/A_1:BA_1/A_1)$, which is a contradiction.
Thus $A_1\leq B$ and hence $(|P|,|A:B|)=1$ because $A_p$ centralizes $P$ by
Proposition 2.5.
\end{proof}

\textit{(2) For any subgroup $C$ of $A$ containing $B$ properly we have $P={%
\langle [P,C]\rangle}^X.$ }

\begin{proof}
Set $P_0={\langle [P,C]\rangle}^X $, and $X_0=S_{h-2}\ldots S_1.$ Suppose
that $P_0\ne P.$ Note that $P_0\lhd XC,$ and set $K=C_{X_0}(P/P_0).$ Then $%
P_0K\lhd PX_0C\lhd XC$. Since $[P,C]\leq P_0$ we have $[X_0,C]\leq K$ by the
three subgroups lemma. Then $[X,C]\leq P_0K$. Notice that $P_0S_{h-2}$ is
normalized by $P_0K$. If $P\leq P_0K$ then $P$ normalizes $P_0S_{h-2}$ and
so 
\begin{equation*}
P=[P,S_{h-2}]\leq [P,P_0S_{h-2}]\leq P_0S_{h-2}\cap P=P_0,
\end{equation*}
which is impossible. Thus we have $P\not \leq P_0K$ and so $P\not \leq
\bigcap_{a\in A}[X,C]^a$. This forces that $P\cap \bigcap_{a\in
A}[X,C]^a\leq \Phi(P)$ by condition (8) of Definition 2.7. Set $%
Y=\bigcap_{a\in A}[X,C]^a$ and $\bar{X}=X/Y.$ An induction argument applied
to the action of $A$ on $\bar{X}$ with respect to $C$ yields that 
\begin{equation*}
h(\bar{X})=h-1\leq \ell(A:C)=\ell(A:B)-\ell(C:B)
\end{equation*}
whence $h\leq \ell(A:B).$ This completes the proof of step \textit{(2)}.
\end{proof}

\textit{(3) There exists $B_1\leq A$ such that $B\lhd B_1$; and an
irreducible complex $XB_1$-submodule $M$ such that $M_{_{X}}$ is
homogeneous, $P\not \leq Ker (X$ on $M), [X,B]\leq Ker (X$ on $M), C_M(B)=M$
and $C_M(B_1)=0.$}

\begin{proof}
Clearly $V\not \leq [G,B]$ as $\bigcap_{a\in A}[G,B]^a=1.$ Note that $V_{XB}$
is completely reducible as $XB\lhd \lhd XA$. Let $W$ be an irreducible $XB$%
-submodule of $V$ such that $W\cap [G,B]=1$. Then $[W,B]=1$ and so $[X,B]\leq Ker(X$ on $W)$ by the three
subgroups lemma. Therefore $W_{_{X}}$ is homogeneous. Let $U$ be the $X$%
-homogeneous component of $V$ containing $W_{_{X}}$. Then $C_U(B)\ne 0,
P\not \leq Ker(X$ on $U)$, and $[X,B]\leq Ker(X$ on $U)$. Set now $B_0=N_A(U)
$. Then $U$ is an irreducible $XB_0$-module, and $B$ is properly contained
in $B_0$ as $C_U(B_0)=0.$

Let $\bar{k}$ be the algebraic closure of $k=\mathbb{F}_{p_{h}}$. Let $I$ be
an irreducible submodule of $U\otimes _k \bar{k}.$ By the Fong-Swan theorem
we may take an irreducible complex $XB_0$-module $M_0$ such that $Ker(X$ on $%
M_0)=Ker(X$ on $U)$ and $M_0$ gives $I$ when reduced modulo $p_h$. Thus $%
C_{M_0}(B)\ne 0,$ $P\not \leq Ker(X$ on $M_0)$, $[X,B]\leq Ker(X$ on $M_0)$
and $C_{M_0}(B_0)=0.$ Observe that $B$ normalizes each $X$-homogeneous
component of $M_0$ as $[X,B]\leq Ker(X$ on $M_0).$ Let now $M$ be an $X$%
-homogeneous component of $M_0$ such that $C_{M}(B)\ne 0.$ Set $%
B_1=N_{B_0}(M).$ Then we have $C_{M}(B_1)=0$ as $C_{M_0}(B_0)=0.$

Suppose that $B$ is not normal in $B_1,$ and let $C=\langle B^{B_1} \rangle.$
Then $[X,C]\leq Ker(X$ on $M)$. On the other hand, by \textit{(2)} we have $%
P={\langle [P,C]\rangle}^X.$ This forces that $P\leq [X,C]\lhd Ker(X$ on $M)$%
, which is not the case. Thus $B\lhd B_1$ whence $C_{M}(B)$ is a nonzero $%
XB_1$-submodule of $M,$ and so $[M,B]=0.$ Then $M$ is an irreducible $%
\mathbb{C}XB_1$-module such that $M_{_{X}}$ is homogeneous, $P\not \leq Ker(P
$ on $M)$, $[X,B]B \leq Ker(XB$ on $M)$, and $C_M(B_1)=0.$
\end{proof}

\textit{(4) Theorem follows.}

\begin{proof}
We consider the set of all pairs $(M_{\alpha}, C_{\alpha})$ such that $B\leq
C_{\alpha}\leq B_1 $, $M_{\alpha}$ is an irreducible $XC_{\alpha}$-submodule
of $M_{_{XC_{\alpha}}}$ and $C_{M_{\alpha}}(C_{\alpha})=0$. Choosing $(M_1,C)
$ with $|C|$ minimum. Then $C_{M_1}(C_0)\ne 0$ for every $B\leq C_0<C$, $%
(M_1)_{_{X}}$ is homogeneous and $Ker(X$ on $M_1)=Ker(X$ on $M).$

Set now $\bar{X}=X/Ker(P$ on $M)$. We can observe that $[Z(\bar{P}),C]=1$:
Otherwise, it follows by Theorem 3.3 that for any $\bar{P}$-homogeneous
component $U$ of $(M_1)_{_{\bar{P}}}$, the module $U$ is $C$-invariant and $%
\bar{X}=N_{\bar{X}}(U)C_{\bar{X}}(C)$. Then $C_{\bar{X}}(C)$ acts
transitively on the set of all $\bar{P}$-homogeneous components of $M_1$.
Clearly we have $[Z(\bar{P}),C]\leq Ker(\bar{P}$ on $U)$ and hence $[Z(\bar{P%
}),C]=1$, as claimed. Thus if $\bar{P}$ is abelian, then $[P,C]\leq Ker(\bar{%
P}$ on $M)$ and hence $P={\langle [P,C]\rangle}^X \leq Ker(\bar{P}$ on $M)$
by \textit{(2)}, which is not the case. Therefore $\bar{P}$ is nonabelian.

Let now $U$ be a homogeneous component of $(M_1)_{_{\Phi(\bar{P})}}$. Notice
that $\Phi(\bar{P})\leq Z(\bar{P})$ by (5) of Definition 2.7 and so $[\Phi(%
\bar{P}),C]=1$. Then $U$ is $C$-invariant. Set $\widehat{\bar{P}}=\bar{P}%
/Ker({\bar{P}}$ on $U)$. Now $\Phi(\widehat{\bar{P}})=\widehat{\Phi(\bar{P})}
$ is cyclic of prime order $p.$ Since $[Z(\bar{P}),C]=1$ we get $[X,C]\leq
C_X(Z(\bar{P}))$ by the three subgroups lemma. Now clearly we have $%
[X,C]\leq N_X(U).$ That is $X=N_X(U)C_X(C)$ as the action is good and so $%
C_X(C)$ acts transitively on the set of all homogeneous components of $%
(M_1)_{_{\Phi(\bar{P})}}$. Hence $M_1=\bigoplus_{t\in T}U^t$ where $T$ is a
transversal for $N_X(U)$ in $X$ contained in $C_X(C).$ Notice that $N_{\bar{X%
}C}(U)=N_{\bar{X}}(U)C.$ Set $X_1=C_X(\Phi(\bar{P}))$. Now $C_{XC}(\Phi(\bar{%
P}))=X_1C\lhd XC$ and we have $[X,C]\leq X_1$ by the three subgroups lemma.
Then $X=X_1C_X(C).$ Clearly we have $PS_{h-2}\leq X_1\leq N_X(U)$ and $%
X_1C\lhd XC\lhd \lhd XA.$ Recall that $P/\Phi(P)$ is an irreducible $XA$%
-module and hence $P/\Phi(P)$ is completely reducible as an $X_1C$-module.
Note that $\widehat{\bar{P}}/\Phi(\widehat{\bar{P}})\cong P/\Phi(P)C_P(U).$
As $P/\Phi(P)$ is completely reducible we see that so is $P/\Phi(P)C_P(U)$.
Hence $\widehat{\bar{P}}/\Phi(\widehat{\bar{P}})$ is also completely
reducible.

Since $\widehat{\Phi(\bar{P})}\leq \widehat{Z(\bar{P})}$, there
is an $X_1C$-invariant subgroup $E$ containing $\widehat{\Phi(\bar{P})}$ so
that 
\begin{equation*}
\widehat{\bar{P}}/\widehat{\Phi(\bar{P})}=\widehat{Z(\bar{P})}/\widehat{\Phi(%
\bar{P})}\oplus E/\widehat{\Phi(\bar{P})}.
\end{equation*}
Then $\widehat{\bar{P}}=\widehat{Z(\bar{P})}E$ and hence $\widehat{Z(\bar{P})%
}\cap E=Z(E).$ Clearly we have $({\widehat{\bar{P}}})^{\prime}=\widehat{\Phi(%
\bar{P})}\leq Z(E).$ Also, 
\begin{equation*}
E/\widehat{\Phi(\bar{P})}\cap \widehat{Z(\bar{P})}/\widehat{\Phi(\bar{P})}=1
\end{equation*}
and hence $Z(E)\leq \widehat{\Phi(\bar{P})}$. Thus we have $Z(E)=\widehat{%
\Phi(\bar{P})}=({\widehat{\bar{P}}})^{\prime}.$ As $E\unlhd \widehat{\bar{P}}$
we get $\Phi(E)\leq \widehat{\Phi(\bar{P})}=Z(E).$ It follows that $%
Z(E)=E^{\prime}=\Phi(E)=\widehat{\Phi(\bar{P})}$ is cyclic of prime order
and hence $E$ is extraspecial. Now $[Z(\bar{P}),C]=1$ gives $[\widehat{Z(%
\bar{P})},C]=1.$ Thus $[Z(E),C]=1.$

Next we observe that $C_{C}(E)\leq B$: Otherwise there is a nonidentity
element $b$ in $C\setminus B$ such that $[\widehat{\bar{P}},b]=1$ and hence $%
[\bar{P},b]\leq Ker(\bar{P}$ on $U)$. Since $X=X_1C_X(C)\leq N_X(U)C_X(C)$
we get $[\bar{P},b]\leq Ker(\bar{P}$ on $M)$. Set $C_1=B\langle b\rangle.$
Recall that $[P,B]\leq Ker(P$ on $M)$ by \textit{(3)}. Then, by \textit{(2)}%
, we have $P={\langle [P,C_1]\rangle}^X\leq Ker(P$ on $M)$ which is not the
case. Therefore $C_{C}(E)\leq B$ as claimed.

Notice that $p$ divides $|B/C_C(E)|$ if and only if $B_p\not \leq C_C(E)$
which is impossible by Proposition 2.5 due to good action. This means by 
\textit{(1)} that $p$ is coprime to $|C/C_C(E)|$. Note also that $B$ and
hence $C_C(E)$ acts trivially on $U$ by \textit{(3)}. We apply now Lemma 2.1
in \cite{Esp} to the action of the semidirect product $E(C/C_C(E))$ on the
module $U$ and see that $C_U(C/C_C(E))\ne 0$. This final contradiction
completes the proof.
\end{proof}
\end{proof}
\vspace{-9mm}

\end{document}